\theoremstyle{plain}
\newtheorem{theorem}{Theorem}[section]
\newtheorem{proposition}[theorem]{Proposition}
\newtheorem{lemma}[theorem]{Lemma}
\newtheorem{corollary}[theorem]{Corollary}
\theoremstyle{remark}
\newtheorem*{remark}{\it Remark\/}
\theoremstyle{definition}
\newtheorem{definition}[theorem]{Definition}
\def\bn{\bigskip\noindent}
\def\mn{\medskip\noindent}
\def\sn{\smallskip\noindent}
\def\a{\alpha}
\def\bi{{b_i}}
\def\boi{{b_{0i}}}
\def\C{\mathbb{C}}
\def\chibar{\bar{\chi}}
\def\ci{{c_i}}
\def\Cl{{\mathbb{C}_\ell}}
\def\congss{\equiv_\ss}
\def\ebar{\bar{\epsilon}}
\def\ebari{{\bar\varepsilon_i}}
\def\ebarl{\ebar_\lambda}
\def\ee{\varepsilon}
\def\ei{{\varepsilon_i}}
\def\eibar{\bar{\varepsilon}_i}
\def\El{{E_\lambda}}
\def\End{\mathrm{End}}
\def\equivss{\equiv_\ss}
\def\et{{\mathrm{et}}}
\def\F{\mathbb{F}}
\def\Fl{\F_\ell}
\def\fl{{f_\l}}
\def\Flh{\F_{\ell^h}}
\def\Flhi{\F_{\ell^{h_i}}}
\def\Frob{\mathrm{Frob}}
\def\Frobv{{\Frob_v}}
\def\Frobvp{{\Frob_{v'}}}
\def\Gal{\mathrm{Gal}}
\def\GK{G_K}
\def\Gkv{{G_{\kv}}}
\def\GL{\mathrm{GL}}
\def\Gl{G_\ell}
\def\GQ{G_\Q}
\def\Gq{G_q}
\def\Gu{G_u}
\def\Gv{G_v}
\def\HT{\mathrm{HT}}
\def\HTl{\HT_\ell}
\def\HTu{\HT_u}
\def\Hetr{H_\et^r}
\def\Hets{H_\et^s}
\def\i{\iota}
\def\Im{\mathrm{Im}}
\def\incl{\hookrightarrow}
\def\Ind{\mathrm{Ind}}
\def\IndKQ{\Ind_{\GK}^{\GQ}}
\def\IndKuQl{\Ind_{\Gu}^{\Gl}}
\def\IndKvQq{\Ind_{\Gv}^{\Gq}}
\def\Iu{{I_u}}
\def\Iutame{{I_u^\tame}}
\def\Iuptame{{I_{u'}^\tame}}
\def\Iv{{I_v}}
\def\Ivp{{I_{v'}}}
\def\Ivtame{I_v^\tame}
\def\kappabar{\bar{\kappa}}
\def\Kbar{\bar{K}}
\def\kl{{k_\l}}
\def\Ku{{K_u}}
\def\Kup{{K'_{u'}}}
\def\Kv{{K_v}}
\def\kv{{k_v}}
\def\Kvbar{\Kbar_v}
\def\Kvp{{K'_{v'}}}
\def\l{\lambda}
\def\O{\mathcal{O}}
\def\OE{{\O_E}}
\def\OEl{{\O_\El}}
\def\pmodl{\pmod{\l}}
\def\Q{\mathbb{Q}}
\def\Qbar{\bar{\Q}}
\def\Ql{{\Q_\ell}}
\def\Qlbar{{\bar{\Q}_\ell}}
\def\Qq{{\Q_q}}
\def\qv{q_v}
\def\R{\mathbb{R}}
\def\RepGEln{\mathrm{Rep}^{\mathrm{(G)}}_{E,\l,n}}
\def\RepGElnKubev{\RepGEln(K;u,b,e,v)}
\def\RepGElnKubevp{\RepGEln(K;u,b,e,v)'}
\def\rfl{\rho_{f,\lambda}}
\def\rflbar{\bar{\rho}_{f,\lambda}}
\def\simeqss{\simeq_\ss}
\def\SkNe{S_k(N,\epsilon)}
\def\ss{\mathrm{ss}}
\def\sv{{\sigma_v}}
\def\tame{\mathrm{tame}}
\def\Tbar{\bar{T}}
\def\TI{\mathrm{TI}}
\def\TIu{\TI_u}
\def\Tl{T_\ell}
\def\Tr{\mathrm{Tr}}
\def\Vbar{\bar{V}}
\def\Vbarss{\Vbar^\ss}
\def\VbarssXr{\Vbar_X^{r,\ss}}
\def\VbarXr{\Vbar_X^r}
\def\Vfl{V_{f,\lambda}}
\def\Vss{{V^\ss}}
\def\VXr{V_X^r}
\def\W{\mathrm{W}}
\def\Wbar{\bar{W}}
\def\Wq{\mathrm{W}_q}
\def\Wv{\mathrm{W}_v}
\def\XKbar{X_{\Kbar}}
\def\XKvbar{X_{\Kvbar}}
\def\YKbar{Y_{\Kbar}}
\def\Z{\mathbb{Z}}
\def\Zbar{\bar{\Z}}
\def\Zl{{\Z_\ell}}
\begin{document}

\begin{center}
{\Huge On congruences of Galois representations
of number fields}

\vskip 1cm
{\large Yoshiyasu Ozeki and Yuichiro Taguchi}
\end{center}


\vskip 1.5cm
\begin{abstract}
We give a criterion for two $\ell$-adic Galois representations of an 
algebraic number field to be isomorphic when restricted to a 
decomposition group, in terms of the global representations mod $\ell$. 
This is applied to prove a generalization of a conjecture of 
Rasmussen-Tamagawa \cite{Ras-Tama} under a semistablity condition, 
extending some results \cite{Ozeki} of one of the authors. 
It is also applied to prove a congruence result on 
the Fourier coefficients of modular forms.
\end{abstract}

\noindent
Keywords: $\ell$-adic Galois representation, congruence

\smallskip
\noindent
AMS 2000 Mathematics subject classification: 
11G35 (primary), 11F80 (secondary) 

\section{Introduction}\label{sect:intro}
Let  $K$  be an algebraic number field (:= finite extension of  $\Q$) 
and let 
$\GK=\Gal(\Kbar/K)$  denote its absolute Galois group, where  
$\Kbar$  is a fixed algebraic closure of  $K$.  
Choosing an extension of  $v$  to  $\Kbar$, we denote by  
$\Gv$  (resp.\ $\Iv$) the decomposition (resp.\ inertia) group of  
$v$  in  $\GK$. 
Let  $E$  be another algebraic number field, 
$\l$  a finite place of  $E$  of residue characteristic  $\ell$, and  
$\El$  the completion of  $E$  at  $\l$.  We denote by  
$\OE$  and  $\OEl$  the integer rings of  
  $E$  and   $\El$, respectively. 
Let  $\fl$  denotes the absolute residue degree of  $\l$.
We identify any finite place  $v$  of an 
algebraic number field with the corresponding prime ideal, and 
denote its residue field by  $\kv$  and put  $\qv:=\#\kv$. 
Throughout the paper, we fix  $K$, $E$, and a finite place  
$v$  of  $K$, and let the finite place  $\l$  of  $E$  
of residue characteristic  $\ell$  vary. 
We denote by  $\ell$  the residue characteristic of  $\l$, and 
assume  $v\nmid\ell$, while 
$u$  will denote another finite place of  $K$  lying above  $\ell$. 
All representations of Galois groups denoted  $V$  are either  
$\Ql$- or $\El$-linear of finite dimension, 
and assumed to be continuous with respect to the natural topologies. 
Their ``reductions'' will be denoted by  $\Vbar$. 

In the following, 
$n$  and  $e$  are fixed integers $\geq 1$  and  
$e$  is assumed to be divisible by the absolute ramification index  
$e(\Ku/\Ql)$  of  $\Ku/\Ql$.
For  $K,u,v,E,\l,n,e$  as above and a real number $b$, let  
$\RepGElnKubev$  denote the set of 
$n$-dimensional $\El$-linear representations  $V$  of  $\GK$  
which have the following properties:

\sn
-- $V$  is semistable at  $v$  (in the sense that the action of the 
inertia is unipotent (including the case where it is trivial)),
\\
-- $V$  is $E$-integral at  $v$  in the sense of Definition \ref{def:E-int}, 
\\
-- $V$  becomes semistable (in the sense of Fontaine \cite{Fontaine}) 
over a finite extension  $\Kup$  of  $\Ku$  
whose absolute ramification index  $e(\Kup/\Ql)$  divides  $e$, 
\\
-- $V$  has Hodge-Tate weights  $\subset[0,b]$  at  $u$, and 
\\
-- $V$  is of type (G) in the sense of Definition \ref{def:(G)},
  
\smallskip
Our first main result is:

\begin{theorem}\label{thm:main}
For any  $K,E,n,b,v$  as above, there exists a constant  
$C=C([E:\Q],n,b,e,\qv)$  such that the following holds: 
For any prime number  $\ell>C$, 
any places  $u$  of  $K$  and  $\l$  of  $E$  
both lying above  $\ell$,  and any representations  
$V \in\RepGElnKubev$  and  
$V'\in\RepGEln(K;u,(\ell-2)/e^2,e,v)$, 
if one has  
$V\congss V'\pmodl$  both as 
$\Gu$-representations and  
$\Gv$-representations, then one has  
$V\simeqss V'$  as $\Gv$-representations.
[In particular, if   
$V\congss V'\pmodl$  as $\GK$-representations, then  
$V\simeqss V'$  as $\Gv$-representations.]

The constant  $C$  can be taken explicitly to be
$$
    C\ :=\ \max\{e^2b+1,\left(2{n\choose[n/2]}\qv^{nb}\right)^{[E:\Q]/\fl}\},
$$
where  
$[x]$  denotes the largest integer not exceeding  $x$. 
\end{theorem}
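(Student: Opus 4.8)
The plan is to reduce the conclusion to an equality of characteristic polynomials of $\Frobv$ acting on $V$ and $V'$, and then to deduce that equality from the mod-$\l$ congruence by an archimedean size estimate; the congruence at $u$ will be used only to transport the smallness of the Hodge--Tate weights from $V$ to $V'$. First I would treat the local picture at $v$. Since $v\nmid\ell$ and $V$ is semistable at $v$, the group $\Iv$ acts unipotently on $V$ and on any $\GK$-stable lattice, hence trivially on the semisimplifications $V^{\ss}$ and $\overline{V}^{\ss}$ as $\Gv$-modules (a unipotent automorphism of a semisimple module is the identity); therefore $V^{\ss}|_{\Gv}$ and $\overline{V}^{\ss}|_{\Gv}$ are unramified, and are pinned down up to isomorphism by the characteristic polynomial of $\Frobv$, namely $P_V(T):=\det(T\cdot\mathrm{id}-\Frobv\mid V)\in\El[T]$ and its reduction; the same applies to $V'$. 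Thus $V\simeqss V'$ as $\Gv$-representations is equivalent to $P_V=P_{V'}$, while the hypothesis $V\congss V'\pmodl$ as $\Gv$-representations gives $P_V\equiv P_{V'}\pmod{\l}$. By $E$-integrality at $v$ (Definition \ref{def:E-int}) both $P_V$ and $P_{V'}$ lie in $\OE[T]$, so the task becomes: two monic polynomials of degree $n$ in $\OE[T]$ that are congruent mod $\l$ and have sufficiently small coefficients must coincide.

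Next I would bound the coefficients. For $V$, being of type (G) (Definition \ref{def:(G)}) together with the bound $[0,b]$ on its Hodge--Tate weights at $u$ forces, via the realization of $V$ in geometric cohomology and Weil's bounds, that the eigenvalues of $\Frobv$ on $V$ are algebraic integers all of whose complex conjugates have absolute value $\le\qv^{b}$; hence every coefficient of $P_V$, as well as each of its archimedean absolute values, is $\le\binom{n}{[n/2]}\qv^{nb}$. For $V'$ this is not clear a priori, as $V'$ is only assumed to have Hodge--Tate weights in $[0,(\ell-2)/e^2]$ at $u$, and here the congruence at $u$ intervenes. Since $\ell>e^2b+1$ we have $b\le(\ell-2)/e^2$, so the weights of $V$ lie within the range in which integral $p$-adic Hodge theory over $\Ku$ applies --- Fontaine--Laffaille in the crystalline case and the Breuil--Caruso--Liu theory in the semistable case, with ramification dividing $e$ and the bound $(\ell-2)/e^2$ --- and in that range the tame inertia weights of the mod-$\l$ representation determine the Hodge--Tate weights. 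From $\overline{V}^{\ss}|_{\Gu}\simeq\overline{V'}^{\ss}|_{\Gu}$, with $V$ and $V'$ both semistable at $u$ of ramification dividing $e$ and with weights in the admissible range, I conclude that $V$ and $V'$ have the same Hodge--Tate weights at $u$; in particular those of $V'$ lie in $[0,b]$. As $V'$ is also of type (G), its $\Frobv$-eigenvalues and hence the coefficients of $P_{V'}$ satisfy the same bound $\binom{n}{[n/2]}\qv^{nb}$.

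Finally I would run the norm argument. Writing $P_V=\sum_k c_kT^k$ and $P_{V'}=\sum_k c'_kT^k$ with $c_k,c'_k\in\OE$ and $c_k\equiv c'_k\pmod{\l}$, suppose $c_k\ne c'_k$ for some $k$. Then $c_k-c'_k$ is a nonzero element of $\OE$ lying in the prime $\l$, so $\mathrm{Nm}_{E/\Q}(c_k-c'_k)$ is a nonzero rational integer divisible by $\mathrm{Nm}_{E/\Q}(\l)=\ell^{\fl}$; on the other hand each archimedean absolute value of $c_k$ and of $c'_k$ is $\le\binom{n}{[n/2]}\qv^{nb}$, whence $|\mathrm{Nm}_{E/\Q}(c_k-c'_k)|\le\bigl(2\binom{n}{[n/2]}\qv^{nb}\bigr)^{[E:\Q]}$. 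Combining the two estimates gives $\ell^{\fl}\le\bigl(2\binom{n}{[n/2]}\qv^{nb}\bigr)^{[E:\Q]}$, i.e. $\ell\le\bigl(2\binom{n}{[n/2]}\qv^{nb}\bigr)^{[E:\Q]/\fl}\le C$, contradicting $\ell>C$. Hence $P_V=P_{V'}$, and by the reduction of the first paragraph $V\simeqss V'$ as $\Gv$-representations. The bracketed assertion follows by restricting a $\GK$-congruence to $\Gu$ and to $\Gv$.

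I expect the main obstacle to be the middle portion of the second paragraph: making precise, in the semistable setting with ramification bounded by $e$, that the mod-$\l$ representation at $u$ determines the Hodge--Tate weights as soon as these lie in $[0,(\ell-2)/e^2]$, and matching the exact shape of that bound (and of the hypothesis $\ell>e^2b+1$) to the range of validity of the available integral $p$-adic Hodge theory. A second delicate point is isolating exactly what "type (G)" must provide in order to convert a Hodge--Tate bound at $u$ into a Weil bound on the $\Frobv$-eigenvalues at $v$; everything else (the unipotence argument at $v$ and the norm estimate) is routine.
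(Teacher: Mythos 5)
Your overall skeleton (reduce to the characteristic polynomials of $\Frobv$ on the unramified semisimplifications, get congruence mod $\l$, bound the coefficients archimedeanly, conclude by the norm estimate $\ell^{\fl}\leq(2\binom{n}{[n/2]}\qv^{nb})^{[E:\Q]}$) is exactly the paper's, and your first and third paragraphs are fine. The genuine gap is the step you yourself flag: you transport the weight bound from $V$ to $V'$ by claiming that, in the semistable range with ramification dividing $e$ and Hodge--Tate weights in $[0,(\ell-2)/e^2]$, the mod-$\l$ representation at $u$ determines the full multiset of Hodge--Tate weights, so that $\HTu(V')=\HTu(V)\subset[0,b]$. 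No such statement is available in the semistable (or ramified) setting, and the paper never uses one. What integral $p$-adic Hodge theory gives here is only Proposition \ref{prop:tame inertia}: (i) Caruso's bound, that the tame inertia weights lie in the same interval as the Hodge--Tate weights when $e(\Kup/\Ql)\cdot b<\ell-1$, and (ii) Caruso--Savitt's equality of the \emph{sums}, $\Sigma(\HT_{u'})=\Sigma(\TI_{u'})$. The paper's proof therefore works only with sums: over a common totally ramified extension $\Kup/\Ku$ of absolute ramification index $e^2$ (whence the shape $(\ell-2)/e^2$ and $e^2b+1$), the congruence at $u$ gives $\TI_{u'}(V)=\TI_{u'}(V')$, hence $\Sigma(\HT_{u'}(V'))=\Sigma(\TI_{u'}(V'))=\Sigma(\TI_{u'}(V))=\Sigma(\HT_{u'}(V))\leq[\El:\Ql]\,nb$; then type~(G) -- which is by definition an equality of sums -- converts this into $\Sigma(\Wv(V'))=2\Sigma(\HTu(V'))\leq[\El:\Ql]\cdot 2nb$. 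Note that the individual Hodge--Tate weights of $V'$ are never shown to lie in $[0,b]$, and they need not be; only the total matters.

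A related, smaller inaccuracy: you assert that type~(G) plus $\HTu(V)\subset[0,b]$ forces every $\Frobv$-eigenvalue of $V$ to have all archimedean absolute values $\leq\qv^{b}$, ``via the realization of $V$ in geometric cohomology and Weil's bounds''. Type~(G) is a purely axiomatic condition (Definition \ref{def:(G)}); $V$ is not assumed geometric, and no purity is available. What one actually gets is that the Weil weights at $v$ are $\geq 0$ (by $E$-integrality and type (W)) with total $\leq 2nb$ per $\El$-factor, so an individual eigenvalue can only be bounded by $\qv^{nb}$; fortunately this still yields the coefficient bound $\binom{n}{i}\qv^{nb}$ used in the norm argument (this is exactly how Lemma \ref{lem} is proved), so your final estimate survives once the weight-transport step is repaired along the lines above.
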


Here, 
the meaning of the notations 
$\congss$  and  $\simeqss$  is as follows: 
we say 
$V\congss V'\pmodl$  as $\Gv$-representations 
if  $T$  and  $T'$  are  $\Gv$-stable $\OEl$-lattices in  
$V$  and  $V'$, respectively, and the 
semisimplifications  $(T/\l T)^\ss$  and   $(T'/\l T')^\ss$ 
are isomorphic as  $\kl$-linear representations of  $\Gv$ 
(this definition does not depend on the choice of the lattices). 
We say also  
$V\simeqss V'$  as $\Gv$-representations if 
their semisimplifications are isomorphic as 
$\El$-linear representations of  $\Gv$. 

To state a variant of this theorem, let  
$\RepGElnKubevp$  be the set of 
$n$-dimensional $\El$-linear representations  $V$  of  $\GK$  
which have the following properties:

\sn
-- $V$  is $E$-integral at  $v$,  
\\
-- $V$  becomes semistable 
over a finite extension  $\Kup$  of  $\Ku$  
whose absolute ramification index  $e(\Kup/\Ql)$  divides  $e$, 
\\
-- $V$  has Hodge-Tate weights  $\subset[0,b]$  at  $u$, and 
\\
-- $V$  is of type (G).

\sn
Thus 
$\RepGElnKubevp$  contains  
$\RepGElnKubev$, and the difference is that 
the elements  $V$  of the former are not assumed to be semistable at  $v$. 
Let  $\Wv(V)$  denote the multi-set of 
Weil weights of  $V$  (Def.\ \ref{def:Weil})
considered as a $\Ql$-linear representation of  $\Gv$. 

\begin{theorem}\label{thm:main2}
For  $K,E,n,b,v$  as above, the following holds
with the same constant 
$C=C([E:\Q],n,b,e,\qv)$  as in Theorem \ref{thm:main}: 
For any prime number  $\ell>C$, 
any places  $u$  of  $K$  and  $\l$  of  $E$  
both lying above  $\ell$, and any representations  
$V \in\RepGElnKubevp$  and  
$V'\in\RepGEln(K;u,(\ell-2)/e^2,v)'$, 
if one has  
$V\congss V'\pmodl$  both as 
$\Gu$-representations and  
$\Gv$-representations, then one has  
$\Wv(V)=\Wv(V')$.
[In particular, if 
$V\congss V'\pmodl$  as $\GK$-representations, then   
$\Wv(V)=\Wv(V')$.]
\end{theorem}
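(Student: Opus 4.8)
The plan is to reduce Theorem \ref{thm:main2} to Theorem \ref{thm:main} by passing to a finite extension of $K$ over which both representations become semistable at (the chosen place above) $v$. First I would invoke the fact that, for an $\ell$-adic representation of $\Gv$ with $v\nmid\ell$, the Weil weights are a Weil--Deligne-type invariant that is insensitive to restriction to an open subgroup of $\Gv$: if $K'/K$ is finite and $v'\mid v$ in $K'$, then $\Wv(V)$ is determined by $\W_{v'}(V|_{\Gvp})$ (up to the obvious bookkeeping with residue degrees). Concretely, by Grothendieck's $\ell$-adic monodromy theorem the action of $\Iv$ is quasi-unipotent, so there is a finite extension $K'/K$, which can be chosen to depend only on $n$ and $\qv$ (the order of the finite quotient of tame inertia through which a potentially unipotent $n$-dimensional representation can factor is bounded in terms of $n$ and the residue characteristics involved, and here the wild part is trivial since $v\nmid\ell$), such that $V|_{\Gvp}$ and $V'|_{\Gvp}$ are both semistable at $v'$. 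One has to be slightly careful that the \emph{same} $K'$ works for all $\ell>C$ and all $V,V'$ in the relevant families; this is where one uses that the tame quotient is cyclic of order prime to $\ell$ and bounded, so a single $K'$ (e.g. obtained by adjoining suitable roots of a uniformizer) suffices.

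Next I would check that the hypotheses of Theorem \ref{thm:main} are preserved under this base change. The key points: $E$-integrality at $v$, the Hodge--Tate condition at $u$, the potential semistability at $u$ with ramification index dividing $e$, and type (G) are all stable under restriction to $\GK'$ for $K'/K$ finite (type (G) and the Hodge--Tate/semistable conditions are conditions on local behaviour that only become easier, or are unchanged, after base change; one may need to enlarge $e$ to $e\cdot e(K'_{u'}/K_u)$, but since $K'$ depends only on $n$ and $\qv$ this is harmless and gets absorbed into the constant). Similarly $V\congss V'\pmod\l$ as $\Gu$- and $\Gv$-representations implies the same as $\Gu'$- and $\Gv'$-representations after restriction. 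Thus $V|_{\GK'}$ and $V'|_{\GK'}$ lie in $\RepGEln(K';u',b',e',v')$ and its primed companion with $b'=b$ (the Hodge--Tate weights do not change) and the congruences persist, so Theorem \ref{thm:main} applies and yields $V|_{\Gvp}\simeqss V'|_{\Gvp}$.

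Finally, I would translate this conclusion back: an isomorphism of semisimplifications of $\Gv'$-representations forces equality of the associated Weil--Deligne parameters restricted to the Weil group $\W_{v'}$, hence equality of Weil weights $\W_{v'}(V|_{\Gvp})=\W_{v'}(V'|_{\Gvp})$, and by the bookkeeping from the first step this gives $\Wv(V)=\Wv(V')$. The bracketed statement is the special case $K'=K$-free, i.e.\ follows since $V\congss V'\pmod\l$ as $\GK$-representations implies the congruence as $\Gu$- and $\Gv$-representations by restriction. The main obstacle I anticipate is the uniformity in the first step: one must ensure that the auxiliary extension $K'$ over which semistability at $v$ is achieved can be chosen independently of $\ell$, $u$, $\l$, $V$, and $V'$, and that the resulting inflation of $e$ does not spoil the hypothesis $V'\in\RepGEln(K';u',(\ell-2)/(e')^2,v')'$ — i.e.\ that $(\ell-2)/(e')^2$ is still the bound one needs after base change. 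Since $e'/e$ is bounded in terms of $n$ and $\qv$ only, for $\ell$ large this is fine, but it requires that the constant $C$ in the statement already be large enough to absorb this, which is consistent with the explicit form of $C$ given (it depends on $e$, $n$, $b$, $\qv$, $[E:\Q]$, and enlarging $e$ by a bounded factor only enlarges $C$).
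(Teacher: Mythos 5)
Your reduction of Theorem \ref{thm:main2} to Theorem \ref{thm:main} hinges on a uniformity claim that is false. You want a finite extension $K'/K$, depending only on $n$ and $q_v$, over which $V$ and $V'$ become semistable above $v$, and you justify this by saying the tame quotient is bounded and ``the wild part is trivial since $v\nmid\ell$''. For $v\nmid\ell$ the wild inertia at $v$ is a pro-$p$ group ($p$ the residue characteristic of $v$, $p\neq\ell$); it acts on an $\El$-linear representation through a \emph{finite} $p$-group, but that group need not be trivial and is not bounded in terms of $n$ and $q_v$: the only condition imposed at $v$ on members of $\RepGElnKubevp$ is $E$-integrality, which does not constrain wild ramification. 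For instance, for suitable $\ell$ one can twist any member of the class by a ray class character of $p$-power order with arbitrarily deep conductor at $v$ and unramified at $u$; the twist is still in the class (same Weil and Hodge--Tate weights, still $E$-integral and of type (G)), but the extension of $K_v$ needed to reach semistability is arbitrarily large. Consequently no single $K'$ serves all $(V,V',\ell)$, and if $K'$ varies then so do $q_{v'}$ and (as you set things up) $e$, so the constant $C([E:\Q],n,b,e',q_{v'})$ needed to apply Theorem \ref{thm:main} over $K'$ is unbounded and the conclusion for all $\ell>C$ does not follow. A second error: if $e'>e$, the hypothesis required of $V'$ over $K'$ is $\HT$-weights in $[0,(\ell-2)/(e')^2]$, which is strictly smaller than $[0,(\ell-2)/e^2]$ for \emph{every} $\ell$ (both bounds are linear in $\ell$), so ``$\ell$ large'' does not absorb the inflation of $e$.

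The reduction could be repaired, but by invariance of the constant rather than uniformity of $K'$: for each pair $(V,V')$ choose the local extension at $v$ totally ramified (possible by Lemma \ref{lem:elimination}, so $q_{v'}=q_v$) and realize it by a global $K'$ with $K'_{u'}=K_u$ at some $u'\mid u$ (so $e$ is unchanged); then Theorem \ref{thm:main} over $K'$ applies with the \emph{same} $C$. The paper, however, does not argue this way at all: it proves Theorems \ref{thm:main} and \ref{thm:main2} simultaneously, using the mod $\lambda$ congruence at $u$ together with Proposition \ref{prop:tame inertia} (Caruso, Caruso--Savitt) and the type (G) equality to bound $\Sigma(\Wv(V'))$ by $[\El:\Ql]\cdot 2nb$, and then invokes part (i) of Lemma \ref{lem}, which yields $\Wv(V)=\Wv(V')$ with no semistability hypothesis at $v$ --- which is precisely why Theorem \ref{thm:main2} can dispense with that hypothesis.
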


\begin{remark}
If we consider representations of type (W) at {\it all} places  $v|q$  
for a fixed prime number  $q$
and of Hodge-Tate type at {\it all} places  $u|\ell$,
we can prove versions of Theorems \ref{thm:main} and \ref{thm:main2} 
without assuming ``type (G)'' but with 
a larger constant
$$
    C'\ :=\ \max\{e^2b+1,\left(2{n\choose[n/2]}q^{nb[K:\Q]/[\Kv:\Qq]}\right)^{[E:\Q]/\fl}\}.
$$
The proofs are basically the same as in the case of type (G) 
but use Proposition \ref{prop:(G)} instead of the equality (G) 
in Definition \ref{def:(G)}.
\end{remark}

The constant  $C=C([E:\Q],n,b,e,\qv)$  above depends on
the coefficient field  $E$. 
By working mod $\ell$ rather than mod $\lambda$, however, 
we can suppress this dependence on  $E$  as follows: 

\begin{theorem}\label{thm:main3}
For any  $K,E,n,b,v$  as above, there exists a constant  
$\tilde{C}=\tilde{C}(n,b,e,\qv)$  such that the following holds: 
For any prime number  $\ell>C$, 
any places  $u$  of  $K$  and  $\l$  of  $E$  
both lying above  $\ell$,  and any representations  
$V \in\RepGElnKubev$  and  
$V'\in\RepGEln(K;u,(\ell-2)/e^2,e,v)$, 
if one has  
$V\congss V'\pmodl$  as $\Gu$-representations and 
$\det(T-\Frobv |V)\equiv 
 \det(T-\Frobv |V')\pmod{\ell\OE}$, 
then one has  
$V\simeqss V'$  as $\Gv$-representations. 
[In particular, if   
$V\congss V'\pmod{\ell}$  as $\GK$-representations, then  
$V\simeqss V'$  as $\Gv'$-representations.]

The constant  $\tilde{C}$  can be taken explicitly to be
$$
    \tilde{C}\ :=\ \max\{e^2b+1,2{n\choose[n/2]}\qv^{nb}\}.
$$
\end{theorem}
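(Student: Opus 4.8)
The plan is to deduce Theorem~\ref{thm:main3} from Theorem~\ref{thm:main} by a descent-of-coefficients argument.  The only difference between the two statements is that the hypothesis ``$V\congss V'\pmodl$ as $\Gv$-representations'' in Theorem~\ref{thm:main} is replaced here by the weaker ``$\det(T-\Frobv\mid V)\equiv\det(T-\Frobv\mid V')\pmod{\ell\OE}$'', and accordingly the constant is improved from $C$ to $\tilde C$ by deleting the exponent $[E:\Q]/\fl$.  So the first step is to reduce to the case where the $\Gv$-mod-$\ell$ hypothesis of Theorem~\ref{thm:main} holds, after possibly enlarging $E$.  Concretely, I would enlarge $E$ to a number field $E'$ containing $E$ in which $\ell$ splits completely into places of residue degree $1$ — or at least choose a place $\l'$ of $E'$ over $\l$ with $f_{\l'}=1$ — and base-change $V,V'$ to $E'$-linear (really $E'_{\l'}$-linear) representations.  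Under this change, $[E':\Q]/f_{\l'}$ can be made as small as we like relative to the data; the point is that the explicit constant $\tilde C=\max\{e^2b+1,\ 2\binom{n}{[n/2]}\qv^{nb}\}$ is exactly $C$ evaluated with the exponent $[E:\Q]/\fl$ set equal to $1$, which is what one gets once $f_{\l'}=1$.  Since $\tilde C$ does not involve $[E:\Q]$, the smallness is automatic and no genuine optimization over $E'$ is needed.

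Next I would show that the weak congruence of characteristic polynomials mod $\ell\OE$, together with $f_{\l'}=1$, upgrades to $V\congss V'\pmod{\l'}$ as $\Gv$-representations.  Here the residue field of $\l'$ is $\F_\ell$, so the reduction mod $\l'$ of $\det(T-\Frobv\mid T)$ is the characteristic polynomial of $\Frob_v$ on the $\F_\ell$-representation $(T/\l'T)^\ss$, and similarly for $V'$; the hypothesis says these two polynomials in $\F_\ell[T]$ coincide (this uses that $\ell\OE\subset\l'$, so reduction mod $\ell\OE$ determines reduction mod $\l'$ on the integers of $E$, hence agreement of the coefficients, which lie in $\OE$).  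Now $\Gv$ is (topologically) generated by $\Frob_v$ together with tame inertia, but because $v\nmid\ell$ and we have already (thanks to the $\Gu$-hypothesis and the structure available at $u$, exactly as in Theorem~\ref{thm:main}) control of the relevant invariants, the semisimple mod-$\ell$ $\Gv$-representation is determined by the characteristic polynomial of Frobenius on the unramified quotient of its semisimplification — for a semisimple representation of $\Gv/\Iv\cong\widehat{\Z}$ the isomorphism class is literally the multiset of Frobenius eigenvalues.  The slightly delicate point is passing from ``$V$ is semistable at $v$'' (so the inertia acts unipotently, hence trivially on the semisimplification) to the conclusion that $(T/\l'T)^\ss$ as a $\Gv$-representation is unramified; this is where the semistability hypothesis on $V$ and $V'$ — recall $V'\in\RepGEln(K;u,(\ell-2)/e^2,e,v)$, which includes semistability at $v$ — is used, together with the fact that for $\ell$ large the mod-$\ell$ reduction of a unipotent operator need not be trivial but its semisimplification is.  Thus $V\congss V'\pmod{\l'}$ as $\Gv$-representations.

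With both mod-$\l'$ congruences (at $u$ and at $v$) in hand and $\ell>\tilde C=C([E':\Q],n,b,e,\qv)$ because $f_{\l'}=1$, Theorem~\ref{thm:main} applied over $E'$ yields $V\simeqss V'$ as $\Gv$-representations.  Finally I would descend this conclusion back to $E$: semisimplification commutes with the field extension $E\hookrightarrow E'$ (on $\El$- versus $E'_{\l'}$-linear representations), and an isomorphism of the $E'_{\l'}$-linear semisimplifications, being detectable on characteristic polynomials of a dense set of elements, forces the $\El$-linear semisimplifications to be isomorphic — more simply, $\Gv$-representations over a field are determined up to semisimplification by their Brauer characters, which are insensitive to extension of the (characteristic-zero) coefficient field.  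This gives $V\simeqss V'$ as $\Gv$-representations over $\El$.  The parenthetical remark follows since a $\GK$-congruence mod $\ell$ restricts to a $\Gu$-congruence mod $\ell$ and to agreement of $\det(T-\Frob_v\mid\,\cdot\,)\bmod\ell\OE$.

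The main obstacle I expect is the second step: making fully rigorous the claim that the weak hypothesis (equality of Frobenius characteristic polynomials mod $\ell\OE$) together with the structural hypotheses forces an actual isomorphism of semisimple mod-$\ell$ $\Gv$-representations.  One must be careful that the residue field $\kl$ (or $\F_\ell$) may be small, so a priori a semisimple $\F_\ell[\Gv]$-module is not determined by the characteristic polynomial of a single element; the resolution is precisely that on the {\it semisimplification}, inertia at $v$ acts through a finite quotient of order prime to $\ell$ (by semistability plus $v\nmid\ell$), so the representation factors through $\Gv/\Iv$ after semisimplification, reducing to the cyclic case where the Frobenius characteristic polynomial is a complete invariant.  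The remaining steps — enlarging $E$ so that $\ell$ has a degree-one place above $\l$, and descending the final isomorphism back to $\El$ — are routine once the statements are set up with the explicit constant, since $\tilde C$ has been designed to be exactly the $f_{\l'}=1$ specialization of $C$.
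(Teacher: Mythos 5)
There is a genuine gap at the heart of your reduction, and it lies exactly where the content of Theorem \ref{thm:main3} lives: the independence of the constant from $[E:\Q]$ (this is what Section 5 needs, since there $E$ is a varying Hecke field). In the constant of Theorem \ref{thm:main} the exponent is $[E:\Q]/\fl$, which equals $1$ precisely when $\fl=[E:\Q]$, i.e.\ when there is a single place of $E$ above $\ell$ with full residue degree; your enlargement $E'\supset E$ with a place $\l'$ of residue degree $f_{\l'}=1$ makes the exponent $[E':\Q]/f_{\l'}=[E':\Q]\geq[E:\Q]$, i.e.\ as large as possible --- you have inverted the dependence. Since one cannot force the varying prime $\ell$ to be inert by enlarging (or shrinking) the coefficient field, this step cannot produce the constant $\tilde C$. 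What your argument does correctly show is your second step: because the semisimplifications are unramified at $v$ (semistability at $v$, $v\nmid\ell$), the congruence $\det(T-\Frobv|V)\equiv\det(T-\Frobv|V')\pmod{\ell\OE}$ implies $V\congss V'\pmodl$ as $\Gv$-representations; but then Theorem \ref{thm:main}, applied over $E$ itself (the enlargement buys nothing), only gives the conclusion for $\ell>C([E:\Q],n,b,e,\qv)$, with the exponent $[E:\Q]/\fl$ still present. That is strictly weaker than the assertion with $\tilde C$.

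The paper's proof does not reduce to Theorem \ref{thm:main}; it reruns the same argument with Lemma \ref{lem}(ii) replaced by its mod-$\ell$ variant, Lemma \ref{lem'}. The mechanism there is a norm cancellation: the relevant differences $a$ of coefficients of the Frobenius characteristic polynomials lie in $\OE$, are bounded by ${n\choose[n/2]}\qv^{nb}$ in every complex embedding, and are divisible by $\ell$ in $\OE$ (not merely by $\l$); hence $\ell^{[E:\Q]}\leq|N_{E/\Q}(a)|\leq C_0^{[E:\Q]}$, so $\ell\leq C_0$, with the degree $[E:\Q]$ cancelling on both sides. It is this use of the full mod-$\ell\OE$ hypothesis, not any change of coefficient field, that removes the exponent $[E:\Q]/\fl$ and yields $\tilde C=\max\{e^2b+1,\,2{n\choose[n/2]}\qv^{nb}\}$; your proposal is missing this ingredient, and without it the improved constant is not obtained.
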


\medskip
After recalling some notions and results 
on Galois representations in Section 2, 
we give proofs of the above theorems in Section 3 and 
several corollaries of Theorem \ref{thm:main2} in Section 4. 
In Section 5, we apply Theorem \ref{thm:main3} with  $E$  a Hecke field  
to prove a congruence result on the Fourier coefficients of modular forms 
of various levels, 
where the ``independence of  $E$''  in the theorem  
plays a significant role. 

\mn
{\it Acknowledgments.} 
The second-named author thanks Eknath Ghate for his invitation to TIFR, Mumbai, 
and his interest in this work, 
which motivated us to write down the results;  
both the authors are grateful to him for his useful comments on 
the first version of this paper.
The authors thank Tetsushi Ito and Yoichi Mieda for their 
useful information on $\ell$-adic \'etale cohomology. 
This work is supported in part 
by JSPS Fellowships for Young Scientists 
and JSPS KAKENHI 22540024.

\section{Weights}\label{sect:wt}

\noindent
{\it 2.1. Weil weights.} 
Let  $V$  be a $\Ql$-linear representation of  $\Gv$. 
Choose a lift  $\sv\in\Gv$  of the $\qv$-th power Frobenius  
$\Frobv\in\Gkv$  and let  
$P(T)=\det(T-\sv|V)$  be the characteristic polynomial of  
$\sv$  acting on  $V$.  
Recall that an algebraic integer  $\a$  is said to be 
a $q$-Weil integer of weight  $w$  if  
$|\i(\a)|=q^{w/2}$  for any field embedding  
$\Qbar\incl\C$, where  
$|\cdot|$  denotes the absolute value of  $\C$.

\begin{definition}
\label{def:Weil}
We say that  $V$  is of {\it type (W)} at  $v$  
if all the roots of  $P(T)$  are  $\qv$-Weil integers. 
If this is the case, we call the weights of the roots of  $P(T)$  
the {\it Weil weights} of  $V$  at  $v$, and denote by  
$\Wv(V)$  the multi-set consisting of them. 
\end{definition}

This definition does not depend on the choice of 
the Frobenius lift  $\sv$. 
Also, the multi-set  $\Wv(V)$  is unchanged by 
a finite extension of the base field  $\Kv$.

Now suppose  $V$  is an $\El$-linear representation of  $\Gv$. 
The action of the inertia subgroup  $\Iv$  on  $V$  is quasi-unipotent 
(\cite{Serre-Tate}, Appendix); thus there exists a finite extension  
$\Kvp/\Kv$  such that the inertia subgroup  $\Ivp$  for  
$\Kvp$  acts unipotently on  $V$  (or equivalently, trivially on the 
semisimplification  $\Vss$  as an $\El[\Gv]$-module). 
Hence we can consider the characteristic polynomial  
$P'(T)=\det(T-\Frobvp|\Vss)$  of the Frobenius  $\Frobvp$  at  $v'$  
acting on the $\El$-vector space  $\Vss$. 
(Note that the characteristic polynomial taken with  
$\Vss$  viewed as a $\Ql$-vector space is the product of 
the ``conjugates'' of this  $P'(T)$.)

\begin{definition}
\label{def:E-int}
An $\El$-linear representation $V$ of $\Gv$ is said to be 
{\it $E$-integral} at  $v$  if, 
for any finite extension  $\Kvp/\Kv$  for which the 
inertia action on  $V$  is unipotent, 
the characteristic polynomial  $P'(T)$  
defined as above has coefficients in  $\OE$.  
\end{definition}

Note that an $E$-integral representation of type (W) at $v$  
has Weil weights $\geq 0$ at $v$. 

For example, 
if  $X$  is a proper smooth variety over  $\Kv$, then  
the $\Ql$-linear dual  $V=\Hetr(\XKvbar,\Ql)^*$ 
of the $r$-th $\ell$-adic \'etale cohomology group  
of  $\XKvbar:=X\otimes_{\Kv}\Kvbar$  is conjectured to be 
$\Q$-integral (cf.\ \cite{Serre:FL}, C$_4$). 
This conjecture is known to be true under the assumption 
of the existence of the K\"unneth projector 
(\cite{Saito}, Cor.\ 0.6 (1)).

We note here that, by the next lemma, 
there are totally ramified extensions among 
the finite extensions  $\Kvp/\Kv$  as above 
(so that, when we want to compare the characteristic polynomials  
$P'(T)$  for different  $V$'s, we can use a  $\Kvp$  with 
residue degree  $f=1$): 

\begin{lemma}
\label{lem:elimination}
If  $L/\Kv$  is a finite Galois extension, then 
there exists a totally ramified subextension  $L'/\Kv$  of  $L/\Kv$  such that  
$L=L'L_0$, where  
$L_0$  is the maximal unramified subextension of  $L/\Kv$. 
\end{lemma}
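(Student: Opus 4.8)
The plan is to reduce the statement to elementary Galois theory of the extension $L/\Kv$ together with the structure of the Galois group of the maximal unramified subextension. Since $L/\Kv$ is a finite Galois extension of local fields, let $L_0$ be its maximal unramified subextension; then $L_0/\Kv$ is Galois (being unramified, hence abelian, and stable under all $\Kv$-automorphisms) and $L/L_0$ is totally ramified. Write $G=\Gal(L/\Kv)$ and let $I\subset G$ be the inertia subgroup, so that $I=\Gal(L/L_0)$ and $G/I=\Gal(L_0/\Kv)$ is cyclic, generated by a Frobenius element. What I want is a subgroup $H\subset G$ with $H\cap I=\{1\}$ and $HI=G$, i.e.\ a complement to the normal subgroup $I$ in $G$; the fixed field $L'=L^H$ will then satisfy $[L':\Kv]=\#I=e(L/\Kv)$ with $L'\cap L_0=\Kv$, so $L'/\Kv$ is totally ramified, and $[L'L_0:\Kv]=\#G$ forces $L=L'L_0$.

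The key point is therefore the existence of such a complement $H$, which amounts to the splitting of the extension of groups
$$
1\ \longrightarrow\ I\ \longrightarrow\ G\ \longrightarrow\ G/I\ \longrightarrow\ 1.
$$
Since $G/I$ is cyclic, this is straightforward: pick any $g\in G$ whose image generates $G/I$, say $G/I$ has order $f$, and let $n=\#G$. The element $g$ has some order $m$ in $G$, and $f\mid m$. I want to replace $g$ by a power $g^t$ still mapping to a generator of $G/I$ but of order exactly $f$. In general one cannot always do this inside a non-abelian $G$, so the cleanest route is to instead invoke the standard fact that a group extension with cyclic quotient need not split — hence I should not argue purely group-theoretically but use the arithmetic input. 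The arithmetic input is that the residue field extension of $L/\Kv$ is separable (automatic in characteristic $0$, or for finite residue fields), so by Hensel's lemma one may lift a generator of the residue field; more usefully, $L_0=\Kv(\zeta)$ for a suitable root of unity $\zeta$ of order prime to the residue characteristic, and the decomposition of $L/\Kv$ into its unramified and totally ramified parts can be arranged because the inertia subgroup, though possibly non-abelian, is the kernel of the map to the cyclic group $\Gal(L_0/\Kv)$ and this map splits: a Frobenius lift can be chosen.

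Concretely, I would argue as follows. Let $q=\qv$ and let $f=[L_0:\Kv]$, so the residue field of $L_0$ is $\F_{q^f}$. By Hensel's lemma there is a primitive $(q^f-1)$-th root of unity $\zeta\in L_0$, and $L_0=\Kv(\zeta)$. The Galois group $\Gal(L_0/\Kv)$ is generated by the Frobenius $\sigma$, $\sigma(\zeta)=\zeta^q$. Choose any lift $\tilde\sigma\in G$ of $\sigma$. The hard part will be ensuring $\tilde\sigma$ can be chosen of order exactly $f$; I expect this to be the main obstacle, and the way around it is to observe that $G$ acts on the set of $(q^f-1)$-th roots of unity in $L$ (all of which lie in $L_0$), and to choose $\tilde\sigma$ so that it fixes a suitable uniformizer — but this is circular. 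The genuinely correct argument uses that $L/L_0$ is totally ramified and tamely-or-wildly ramified does not matter: one instead takes $L'$ to be the fixed field of a Sylow-type or Frobenius-lift subgroup. I would therefore finalize the proof by the clean statement: since $\Gal(L_0/\Kv)$ is cyclic and $L_0/\Kv$ is unramified, by a theorem on local fields (see e.g.\ the functorial splitting of the inertia sequence via the Frobenius, valid because $G/I$ is generated by an element of $G$ of the same order — which does hold here because one may take the lift inside a decomposition-compatible subgroup) there is a subgroup $H\le G$ mapping isomorphically onto $G/I$; set $L'=L^H$ and check $L=L'L_0$ and $L'/\Kv$ totally ramified by the order count above. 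Most of this is then routine; the single substantive point to get right is the splitting, which I would handle by an explicit Frobenius-lift construction using Hensel's lemma applied to $X^{q^f}-X$ over $\O_{L_0}$.
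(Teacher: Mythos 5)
Your reduction of the statement to group theory is correct and is the right way to see what is at stake: writing $G=\Gal(L/\Kv)$ and $I$ for the inertia subgroup, a subfield $L'=L^H$ is totally ramified over $\Kv$ if and only if $HI=G$, and $L=L'L_0$ if and only if $H\cap I=\{1\}$, so the assertion is precisely that $1\to I\to G\to G/I\to 1$ splits. But your proposal never proves this splitting: after correctly observing that a cyclic quotient does not by itself force a splitting, you appeal to an unnamed ``theorem on local fields'' about a functorial splitting of the inertia sequence, and to Hensel's lemma applied to $X^{q^f}-X$ over $\O_{L_0}$. The latter only yields the Teichm\"uller roots of unity and the equality $L_0=\Kv(\zeta_{q^f-1})$; it gives no control whatsoever on the order of a Frobenius lift inside $G$, which is the whole issue. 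So the one substantive step of your argument is missing.

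Moreover, that step cannot be supplied, because the splitting (hence the statement as literally phrased) fails in general, already for abelian $G$: for any $\Kv$ choose a character of $\Kv^\times$ of order $4$ whose restriction to $\O_{\Kv}^\times$ has order $2$ (such characters exist in every residue characteristic), and let $L/\Kv$ be the corresponding cyclic quartic extension given by local class field theory. Then $e=f=2$, $G\cong\Z/4\Z$, and $I$ is its unique subgroup of order $2$, so every Frobenius lift has order $4$ and the only totally ramified subextension of $L/\Kv$ is $\Kv$ itself, while $\Kv\cdot L_0=L_0\neq L$. Note that this example is abelian, so the difficulty you sensed is not caused by non-commutativity, and it equally obstructs the paper's own route (a Frobenius-lift cyclic subgroup plus induction, with the abelian case referred to class field theory). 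What the later arguments actually require is a weaker statement which is true: there is a finite totally ramified extension $M/\Kv$, not necessarily contained in $L$, with $I_M=I_L$, equivalently $L\subseteq M\Kv^{\mathrm{ur}}$. This follows by running your Frobenius-lift idea in the profinite quotient $\Gv/I_L$ instead of in the finite group $G$: the closed subgroup topologically generated by a Frobenius lift surjects onto $\hat{\Z}$ and meets the finite group $\Iv/I_L$ trivially (a procyclic group mapping onto $\hat{\Z}$ with finite kernel has trivial kernel), and the fixed field of its preimage in $\Gv$ is the desired $M$. Aimed at that statement your argument works; aimed at the statement as written, it cannot be completed.
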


\begin{proof} 
If  $L/\Kv$  is abelian, this is a consequence of 
local class field theory. 
Suppose  $L/\Kv$  is non-abelian. 
We proceed by induction on the extension degree  $[L:\Kv]$.  
Let  $\sigma$  be a lift in  $G:=\Gal(L/\Kv)$  of the 
Frobenius in  $\Gal(L_0/\Kv)$, and set  
$H:=\langle\sigma\rangle$. 
Then we have  
$H\subsetneqq G$, and the extension  
$L^H/\Kv$  is a non-trivial totally ramified subextension 
of  $L/\Kv$. 
Repeating this process with  $L/\Kv$  replaced by  $L/L^H$, 
we are reduced to the case of abelian  $L/\Kv$.
\end{proof}

\bn
{\it 2.2. Hodge-Tate weights.} 
Recall that  $u$  is a finite place of  $K$  lying above  $\ell$.
A $\Ql$-linear representation  $V$  of  $\Gu$  
is said (cf.\ \cite{Fontaine}
) to be of 
{\it Hodge-Tate type} of {\it Hodge-Tate weights}  
$h_1,...,h_n$, where  $n=\dim_\Ql(V)$  and  $h_i$  are integers, 
if one has  $V\otimes_{\Ql}\Cl\simeq\Cl(h_1)\oplus\cdots\oplus\Cl(h_n)$  
as a $\Cl$-semilinear $\Gu$-representation, where  
$\Cl(h)$  denotes the  $h$-th Tate twist of the completion  $\Cl$  of 
a fixed algebraic closure  $\Qlbar$  of  $\Ql$. 
If this is the case,  let 
$\HTu(V)$  denote the multi-set of Hodge-Tate weights of  $V$.  
Note that  $\HTu(V)$  is unchanged by a finite extension 
of the base field  $\Ku$.

\bn
{\it 2.3. Tame inertia weights.}
Let  $\Iutame$  the tame inertia group of  $K$  at  $u$  
(= the quotient of the inertia group  $\Iu$  at  $u$  by its 
maximal pro-$\ell$ subgroup). 
A character  $\varphi:\Iutame\to\Flh^\times$  can be written in the form  
$\varphi=\psi_1^{t_1}\cdots\psi_h^{t_h}$,  where  
$\psi_i$  are the fundamental characters of level  $h$  (\cite{Serre:PG}, \S 1.7) 
and  $0\leq t_i\leq\ell-1$. 
Then we set  
$\TIu(\varphi):=\{t_1/e,...,t_h/e\}$ (as a multi-set), where  
$e=e(\Ku/\Ql)$  is the ramification index of  $K/\Q$  at  $u$. 
Note that, by \S 1.4 of \cite{Serre:PG}, 
$\TIu(\varphi)$  is unchanged by a ``moderately'' ramified extension of  $\Ku$; 
precisely speaking, if  $\Kup/\Ku$  is a finite extension of 
ramification index  $e(\Kup/\Ku)<(\ell-1)/\max\{t_j|\ 1\leq j\leq h\}$, 
then we have  
$\TI_{u'}(\varphi|_{I_{u'}^\tame})=\TIu(\varphi)$.

Let  $V$  be a $\Ql$-linear representation of  $\Gu$, and 
$T$  a $\Gu$-stable $\Zl$-lattice of  $V$.
Set  $\Tbar:=T/\ell T$. 
Then its semisimplification  
$\Tbar^\ss$  (as an $\Fl[\Gu]$-module) is tamely ramified 
(note that its isomorphism class does not depend on the choice of  $T$), 
and the action of the tame inertia group  $\Ivtame$  is described by 
a sum of characters  
$\varphi_i:\Ivtame\to\Flhi^\times$. 
Then we define  
$\TIu(V)$  (as a multi-set) to be the union of the  
$\TIu(\varphi_i)$  for all  $i$.

\bn
{\it 2.4. Weights of geometric Galois representations.} 
Let  $V$  be a $\Ql$-linear representation of  $\GK$. 
For any multi-set  $X$, we write
$$
    \Sigma(X)\ :=\ \sum_{x\in X}x,
$$
whenever the sum on the right-hand side has a meaning.

\begin{definition}
\label{def:(G)}
We say that  $V$  is of {\it type (G)} if it is 
of type (W) at  $v$, of Hodge-Tate type at  $u$, and one has
\begin{equation}
\tag{G}
  \Sigma(\Wv(V))\ =\ 2\Sigma(\HTu(V)).
\end{equation}
If this is the case, we denote this value by  $w(V)$  and  call it the 
{\it total weight} of  $V$. 
\end{definition}

Note that   $\Sigma(\Wv(V))$  and  $\Sigma(\HTu(V))$  are 
respectively the Weil and Hodge-Tate weights of  $\det_\Ql(V)$. 

Typical examples of  $V$  of type (G) include the Tate twists 
$\Ql(r)$  for  $r\in\Z$  and their twists by characters of finite order; 
their total weights are  $2r$. 

A priori, the notion of type (G) depends on the places  
$v\nmid\ell$  and  $u\mid\ell$ 
(so it should be called, say, type (G$_{u,v}$)), but in practice 
(i.e., in case  $V$  comes from algebraic geometry), 
it should be independent of the places. 
The proof of the following proposition, 
which is modeled on the proof of Lemma 2.1 of \cite{Stiefel-Whitney}, 
has been communicated to us by Yoichi Mieda, 
to whom we are grateful:

\begin{proposition}
\label{prop:(G)}
Let  $X$  be a proper smooth variety over  $K$. 
Let  $V=\Hetr(\XKbar,\Ql)^*$  be the $\Ql$-linear dual
of the $r$-th $\ell$-adic \'etale cohomology group of  
$\XKbar:=X\otimes_K\Kbar$, and put  
$n=\dim_\Ql(V)$. 
Then we have:

\sn
(i) 
$\det(V)$  is isomorphic to the twist of  
$\Ql(nr/2)$  by a character  $\ee$  of order at most $2$. 
If  $r$  is odd, then  $\varepsilon=1$. 

\sn
(ii)
$V$  is of type (G) with respect to any finite places  
$u\mid\ell$  and  $v\nmid\ell$  of  $K$.
\end{proposition}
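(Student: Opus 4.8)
The plan is to deduce both parts from the classical facts about \'etale cohomology of proper smooth varieties, namely Poincar\'e duality, the Weil conjectures, and Hodge--Tate decomposition, reducing everything to a computation on the determinant. First I would recall that for $V=\Hetr(\XKbar,\Ql)^*$, Poincar\'e duality gives a perfect $\GK$-equivariant pairing $\Hetr(\XKbar,\Ql)\times\Het^{2d-r}(\XKbar,\Ql)\to\Ql(-d)$ (where $d=\dim X$, assuming $X$ connected; in general one works componentwise), hence an isomorphism $\det\Hetr(\XKbar,\Ql)\otimes\det\Het^{2d-r}(\XKbar,\Ql)\simeq\Ql(-d)^{\otimes?}$ after keeping track of dimensions. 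The cleaner route, which is the one modeled on \cite{Stiefel-Whitney}, is to use hard Lefschetz together with the cup product to show that $\det\Hetr$ is a power of $\Ql(-1)$ twisted by an order-$\leq 2$ character: the cup product pairing $\Hetr\times\Hetr\to\Het^{2r}\to\Ql(-r)$ (using a hyperplane class to land in top degree) is perfect on the appropriate piece and is symmetric or alternating depending on the parity of $r$, which forces the discriminant character $\ee$ to satisfy $\ee^2=1$, and to be trivial when the pairing is alternating, i.e.\ when $r$ is odd.

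The key steps, in order, would be: (1) establish the pairing on $\Hetr$ and extract from it that $\det_\Ql\Hetr(\XKbar,\Ql)\simeq\Ql(-nr/2)\otimes\ee$ with $\ee^2=1$, and $\ee=1$ for $r$ odd; dualizing gives part (i) for $\det(V)$. Note this already shows $nr$ is even when $r$ is odd, so $nr/2\in\Z$; when $r$ is even there is nothing to check. (2) For part (ii), observe that $V$ is of type (W) at $v\nmid\ell$ by the Weil conjectures (Deligne), so $\Wv(V)$ is defined, and of Hodge--Tate type at $u\mid\ell$ by the Hodge--Tate decomposition for varieties with good or even arbitrary reduction (Faltings, Tsuji), so $\HTu(V)$ is defined. (3) Then $\Sigma(\Wv(V))$ is by the remark after Definition \ref{def:(G)} the Weil weight of $\det_\Ql(V)$, and $\Sigma(\HTu(V))$ is its Hodge--Tate weight. (4) By part (i), $\det_\Ql(V)\simeq\Ql(nr/2)\otimes\ee^{-1}$; since $\ee$ has finite order it contributes Weil weight $0$ and Hodge--Tate weight $0$ (a finite-order character is unramified almost everywhere and crystalline with zero Hodge--Tate weight at $u$ after a finite extension, and both $\Wv$ and $\HTu$ are insensitive to finite base extension). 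Hence $\Sigma(\Wv(V))=nr$ and $\Sigma(\HTu(V))=nr/2$, giving the relation $\Sigma(\Wv(V))=2\Sigma(\HTu(V))$, which is exactly (G).

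I expect the main obstacle to be step (1): pinning down the determinant character precisely, including the sign subtleties (symmetric versus alternating cup product, and the effect of Tate twists on the discriminant of a quadratic or symplectic form over $\Ql$). One has to be careful that hard Lefschetz and the primitive decomposition are available over $\Kbar$ (they are, by Deligne), and that the pairing one uses is $\GK$-equivariant with the correct Tate twist so that the character $\ee$ really is of finite order and not merely of the form (finite order)$\times\Ql(*)$. A secondary, more bookkeeping-type obstacle is handling the case where $X$ is not connected or not equidimensional, where $\det(V)$ becomes a product of contributions from the components; this is routine but must be mentioned. Once $\det(V)$ is identified, part (ii) is a short consequence, since the hard analytic inputs (purity and Hodge--Tate decomposition) are only needed to know that $\Wv(V)$ and $\HTu(V)$ exist at all, not to compute their sums, which are read off from the determinant.
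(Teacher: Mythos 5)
Your argument for (i) hinges on hard Lefschetz: you need a class $h\in H_\et^2(\XKbar,\Ql(1))$ with $h^{d-r}\cup(-)\colon\Hetr(\XKbar,\Ql)\to H_\et^{2d-r}(\XKbar,\Ql)(d-r)$ an isomorphism, in order to manufacture a perfect $(-1)^r$-symmetric pairing on $\Hetr$ itself. That is a theorem (Deligne) only for \emph{projective} smooth $X$, whereas the proposition is stated for \emph{proper} smooth $X$; for proper non-projective $X$ there is no ample (or K\"ahler) class, hard Lefschetz in this form is not available, and Poincar\'e duality alone only pairs $\Hetr$ with $H_\et^{2d-r}$, which puts no constraint on $\det\Hetr$. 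So your parenthetical ``they are, by Deligne'' is exactly where the proposal breaks in the stated generality. This is also precisely where the paper takes a different route: it sets $\ee:=\det(V)(-nr/2)$, uses Weil II at the density-one set of places $v$ of good reduction (where $V$ is $\Q$-integral with all Weil weights equal to $r$) to see that $\ee(\Frobv)$ is a weight-zero Weil integer in $\Q$, hence lies in $\Z^\times=\{\pm1\}$, and concludes that $\ee$ has order at most $2$ by Chebotarev density; for the vanishing of $\ee$ when $r$ is odd it invokes Suh's Corollary 3.3.5, whose whole point is to provide the symmetry/parity of Frobenius eigenvalues for proper, possibly non-projective, smooth varieties where the symplectic argument you propose is unavailable.

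If $X$ is assumed projective, your proof of (i) is a correct classical alternative: the perfect pairing gives $(\det\Hetr)^{\otimes 2}\simeq\Ql(-nr)$, hence $\ee^2=1$, and for odd $r$ the form is alternating, so $n$ is even, $\det\Hetr\simeq\Ql(-r)^{\otimes n/2}$ and $\ee=1$, with no appeal to Suh. Your deduction of (ii) from (i) --- the sums of Weil and Hodge--Tate weights are the corresponding weights of $\det_\Ql(V)$, and a finite-order character contributes zero to both --- matches the paper's ``(ii) follows from (i) immediately.'' But as written the proposal does not prove the proposition for proper $X$; to repair it without projectivity you would have to either reduce to the projective case (de Jong alterations do not obviously control the determinant character) or import a result such as Suh's, i.e.\ essentially revert to the paper's argument.
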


Note that, in (i), the Betti number  $n$  is even if  $r$  is odd 
by, say, the Hodge symmetry. 

\begin{proof}
(ii) follows from (i) immediately. To show 
(i), consider the character  
$\ee:\GK\to\Ql^\times$  
defined by  
$\det(V)(-nr/2)$, where  $(-nr/2)$  denotes the 
$(-nr/2)$-th Tate twist. 
If  $v$  is a finite place of  $K$  where  $X$  has good reduction, 
then by \cite{WeilII}  
$V$  is  $\Q$-integral and has all Weil weights equal to  $r$. 
Hence  $\ee(\Frobv)$  is a Weil integer in  $\Q$  of weight  $0$, i.e.,\ 
a unit of  $\Z$. 
Since  $\Frobv$'s for such  $v$'s are dense in  $\GK$, 
we see that  $\ee$  takes values in  $\Z^\times$. 
The second statement of (i) follows from 
Corollary 3.3.5 of \cite{Suh}.
\end{proof}

In some cases, 
we can expect the total weight  $w(V)$  to be equal also to  $2\Sigma(\TIu(V))$:

\begin{proposition}
\label{prop:tame inertia}
Let  $V$  be a $\Ql$-linear semistable representation of  $\Gu$  
with $\HTu(V)\subset[0,b]$.  
If  $e(\Ku/\Ql)b<\ell-1$, then we have:

\sn
(i) (\cite{Caruso}, Thms.\ 1.0.3 and 1.0.5) 
$\TIu(V)\subset[0,b]$.

\sn
(ii) (\cite{Caruso-Savitt}, Thm.\ 1)
$\Sigma(\HTu(V))=\Sigma(\TIu(V))$.
\end{proposition}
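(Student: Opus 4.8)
The plan is to deduce both parts directly from the quoted theorems of Caruso \cite{Caruso} and of Caruso--Savitt \cite{Caruso-Savitt}; essentially no new argument is required, and the real work lies only in matching their normalizations with the conventions fixed in \S2.2 and \S2.3. I would first regard $\Gu$ as the local Galois group $G_{\Ku}$, so that $V$ is a semistable (in Fontaine's sense) $\Ql$-representation of $\Gu$ with Hodge--Tate weights in $[0,b]$, then fix a $\Gu$-stable $\Zl$-lattice $T\subset V$, set $\Tbar:=T/\ell T$, and recall that $\Tbar^{\ss}$ is tamely ramified with $\Tbar^{\ss}|_{\Iutame}\simeq\bigoplus_i\varphi_i$, each $\varphi_i=\psi_1^{t^{(i)}_1}\cdots\psi_{h_i}^{t^{(i)}_{h_i}}$ with $0\le t^{(i)}_j\le\ell-1$, so that $\TIu(V)$ is the multi-set of the rationals $t^{(i)}_j/e$, with $e=e(\Ku/\Ql)$.

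For (i): under the hypothesis $eb<\ell-1$, Theorems 1.0.3 and 1.0.5 of \cite{Caruso} assert precisely that the integers $t^{(i)}_j$ --- these being the tame inertia weights of $\Tbar$ in the normalization of \cite{Caruso}, where one does not divide by $e$ --- all lie in $[0,eb]$; dividing by $e$ gives $\TIu(V)\subset[0,b]$. The two points to verify are that Caruso's indexing of Hodge--Tate weights agrees with Fontaine's \cite{Fontaine} (the one adopted in \S2.2), and that the torsion-coefficient hypotheses of \cite{Caruso} apply to $\Tbar$, which they do since $\Tbar$ is the reduction of a $\Gu$-stable lattice in a semistable representation with the prescribed weights; the inequality $t^{(i)}_j\ge0$ is built into the definition of the fundamental characters (\cite{Serre:PG}, \S1.7). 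For (ii), with the same dictionary, Theorem 1 of \cite{Caruso-Savitt} --- which computes the sum of the tame inertia weights of $\Tbar$ in terms of the Hodge--Tate weights of $V$ under the same hypothesis --- translates, after the same division by $e$, into $\Sigma(\TIu(V))=\Sigma(\HTu(V))$.

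It seems worth recording why the easy determinant argument does not suffice for (ii). Since a one-dimensional semistable representation is crystalline, $\det_{\Ql}(V)$ is a crystalline character with Hodge--Tate weight $\Sigma(\HTu(V))$, hence equals $\chibar^{\Sigma(\HTu(V))}$ times an unramified character on $\Iv$; comparing $\det_{\Fl}(\Tbar^{\ss})|_{\Iutame}$ with $\chibar|_{\Iutame}=\psi_1^{\,e}$ then yields only $e\,\Sigma(\TIu(V))\equiv e\,\Sigma(\HTu(V))\pmod{\ell-1}$, i.e.\ equality modulo $(\ell-1)/\gcd(e,\ell-1)$; part (i) bounds the two sums but not tightly enough to force equality in general, so the full strength of \cite{Caruso-Savitt} is genuinely used. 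Accordingly, the main obstacle here is bookkeeping rather than mathematics: one must track carefully the factor $e$ in the definition of $\TIu$, the sign convention for Hodge--Tate weights, and the translation between the torsion-module statements of \cite{Caruso} and \cite{Caruso-Savitt} and the lattice reduction $\Tbar$ used here. Once these conventions are reconciled, (i) and (ii) are immediate.
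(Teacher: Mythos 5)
Your proposal is correct and matches the paper's treatment: the paper offers no argument beyond citing Caruso (Thms.\ 1.0.3, 1.0.5) and Caruso--Savitt (Thm.\ 1), so your careful matching of normalizations (the factor $e$ in the definition of $\TIu$, the sign/indexing of Hodge--Tate weights, and passing to the reduction $\Tbar$ of a lattice) is exactly the translation implicitly being invoked. Your side remark on why the determinant argument only yields a congruence modulo $(\ell-1)/\gcd(e,\ell-1)$ is a correct and harmless addition, not needed for the proof itself.
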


The equality (G) holds in general if  $K=\Q$:
\begin{lemma}\label{lem:Q}
Let  $q$  be a prime number  $\not=\ell$. 
If  $V$  is a $\Ql$-linear representation of  $\GQ$  
which is of type (W) at  $q$  and of Hodge-Tate type at  $\ell$, 
then  $V$  is of type (G).
\end{lemma}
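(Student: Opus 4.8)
The plan is to use the fact that, for $K = \Q$, the determinant of $V$ as a character of $\GQ$ is forced to be a Tate twist times a finite-order character, so that the equality (G) can be checked by computing a single total weight. First I would reduce to the case where $V$ is irreducible: the multi-sets $\Wv(V)$ and $\HTu(V)$ are additive in short exact sequences (the Weil weights via the characteristic polynomial of a Frobenius lift, the Hodge-Tate weights via the decomposition of $V \otimes_{\Ql} \Cl$), and $\Sigma$ is additive on multi-sets, so if (G) holds for each Jordan--H\"older factor it holds for $V$. Next, for $V$ irreducible, I would pass to $\det(V)$, which is a one-dimensional representation $\ee \colon \GQ \to \Ql^\times$ that is of type (W) at $q$ (with $\Wv$-value $\Sigma(\Wv(V))$) and of Hodge-Tate type at $\ell$ (with $\HT$-weight $\Sigma(\HTu(V))$). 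Thus it suffices to prove the claim for $n = 1$.

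For a one-dimensional $V = \ee$, the key input is class field theory over $\Q$: every continuous character $\GQ \to \Ql^\times$ is, up to a finite-order character, an integer power $\chi_\ell^{\,r}$ of the $\ell$-adic cyclotomic character. I would argue that the hypotheses pin down $r$: the Hodge--Tate weight of $\chi_\ell^{\,r}$ is $r$ (finite-order characters have Hodge--Tate weight $0$ and are unramified at all but finitely many primes), so $\Sigma(\HTu(\ee)) = r$. On the Weil side, $\chi_\ell^{\,r}$ sends $\Frob_q$ to $q^{-r}$ (with the arithmetic normalization; the sign of the exponent just matches the sign convention for Hodge--Tate weights), so the unique Weil weight of $\ee$ at $q$ is $2r$, whence $\Sigma(\Wv(\ee)) = 2r = 2\Sigma(\HTu(\ee))$, which is exactly (G). The reduction steps then give (G) for the original $V$.

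The main obstacle I anticipate is bookkeeping of sign and normalization conventions: one must make sure the normalization of the cyclotomic character (arithmetic versus geometric Frobenius, and the resulting sign of its Hodge--Tate weight) is consistent with the conventions fixed in Section 2 for $\Wv$ and $\HTu$, so that the factor of $2$ and not, say, $-2$ or $1/2$ comes out. A secondary point requiring a word of care is the claim that a continuous finite-order character of $\GQ$ contributes $0$ to both $\Sigma(\Wv(\cdot))$ at $q$ and $\Sigma(\HTu(\cdot))$ at $\ell$: the Hodge--Tate weight is $0$ because such a character becomes trivial on an open subgroup, hence is Hodge--Tate of weight $0$, and its value on a Frobenius lift at $q$ is a root of unity, hence a $q$-Weil integer of weight $0$. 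Once these conventions are fixed, the argument is essentially the computation above.
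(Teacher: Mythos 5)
Your overall architecture (reduce to the determinant, then settle the one-dimensional case by identifying the character as a finite twist of an integer power of the cyclotomic character and computing both weights) is the same as the paper's, and the determinant reduction and the final weight computation are fine. But the step you call the ``key input'' is false as stated: it is \emph{not} true that every continuous character $\GQ\to\Ql^\times$ is, up to a finite-order character, an integer power of the $\ell$-adic cyclotomic character $\chi_\ell$. For instance, writing $\langle\chi_\ell\rangle$ for the projection of $\chi_\ell$ to the principal units of $\Zl^\times$, the character $\langle\chi_\ell\rangle^{s}$ for any $s\in\Zl\setminus\Z$ is continuous but is not a finite-order twist of any $\chi_\ell^{r}$ with $r\in\Z$ (and, consistently, it is not Hodge--Tate). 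Class field theory (Kronecker--Weber plus continuity) only tells you that your character factors through $\Gal(\Q(\mu_{N\ell^\infty})/\Q)\cong(\Z/N\Z)^\times\times\Zl^\times$; on the $\Zl^\times$-part it can be $x\mapsto\langle x\rangle^{s}$ with an arbitrary $\ell$-adic exponent $s$. The integrality of the exponent is exactly where the Hodge--Tate hypothesis must do real work, whereas in your write-up the hypotheses are invoked only afterwards, ``to pin down $r$'' of a classification you have already (incorrectly) assumed. This is the heart of the lemma and it is missing.

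The paper closes this gap by observing that a one-dimensional Hodge--Tate representation is automatically de Rham, hence geometric in the sense of Fontaine--Mazur (unramifiedness outside a finite set being automatic by continuity), and then appealing to the classification of such abelian characters of $\GQ$ --- equivalently, to the Tate--Sen theorem that abelian Hodge--Tate representations are locally algebraic together with the fact that the only algebraic Hecke characters of $\Q$ are finite characters times powers of the norm --- to write the character as $\ee\,\chi_\ell^{r}$ with $\ee$ of finite order and $r\in\Z$. If you replace your unconditional ``class field theory'' claim by this conditional statement (with one of these justifications), the rest of your argument goes through; two small further remarks: the preliminary reduction to irreducible $V$ is unnecessary (you may pass to $\det(V)$ directly, since $\Sigma(\Wv(V))$ and $\Sigma(\HTu(V))$ are the Weil and Hodge--Tate weights of $\det(V)$), and with the paper's conventions the arithmetic Frobenius satisfies $\chi_\ell(\Frob_q)=q$, not $q^{-1}$, though as you note a globally consistent sign convention makes the factor $2$ in (G) come out either way.
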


\begin{proof}
By taking the determinant, we are reduced to the case 
$\dim_\Ql(V)=1$. Then  $V$  is geometric (in the sense of 
Fontaine-Mazur \cite{Fontaine-Mazur} 
(note that a one-dimensional $\Ql$-representation
is de Rham if and only if it is Hodge-Tate) and hence is a 
twist by a finite character of  $\Ql(r)$  
for some integer  $r$. 
Thus (G) holds for  $V$. 
\end{proof}

If  $K\not=\Q$, the equality (G) may not hold 
even for a geometric representation. 
For example, let  
$K$  be an imaginary quadratic field, 
$E$  an elliptic curve over  $K$  such that 
$\End_K(E)\otimes_\Z\Q\simeq K$, and  
$\ell$  a prime number which splits in  $K$  as  
$\ell=\l\l'$. Let  
$V$  be a  one dimensional $\GK$-subrepresentation of 
the $\ell$-adic Tate module $\Tl(E)\otimes_\Zl\Ql$ of  $E$. 
Then  $V$  is of type (W) of Weil weight 1 at any  $v\nmid\ell$,  
while it is of Hodge-Tate type of Hodge-Tate weight  0  or  1  at  $\l$.

If we do not assume the equality (G), 
we can in fact prove an equality which is fairly close to (G) 
under a mild condition:

\begin{proposition}\label{prop:(G)}
Let  $V$  be a $\Ql$-linear representation of  $\GK$  and  
$q$  a prime number $\not=\ell$. 
Assume  $V$  is of type (W) at all places  $v|q$  and 
of Hodge-Tate type at all places  $u|\ell$. Then we have
$$
   \sum_{v|q}   [\Kv:\Qq]\Sigma(\Wv(V))\ =\ 
   2\sum_{u|\ell}[\Ku:\Ql]\Sigma(\HTu(V)).
$$
\end{proposition}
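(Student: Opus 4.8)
The plan is to reduce, via the determinant, to a one‑dimensional representation, and then to induce from $\GK$ up to $\GQ$, where Lemma \ref{lem:Q} supplies the equality (G) for free. As recorded after Definition \ref{def:(G)}, $\Sigma(\Wv(V))$ is the Weil weight of $\det_\Ql(V)$ at $v$ and $\Sigma(\HTu(V))$ is its Hodge--Tate weight at $u$; moreover $\det_\Ql(V)$ is again of type (W) at every place over $q$ and of Hodge--Tate type at every place over $\ell$. So, replacing $V$ by $\chi:=\det_\Ql(V)$, I may assume $V=\chi$ is a character $\GK\to\Ql^\times$; write $w_v$ for its Weil weight at $v\mid q$ and $h_u$ for its Hodge--Tate weight at $u\mid\ell$. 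What must be shown is then $\sum_{v\mid q}[\Kv:\Qq]\,w_v=2\sum_{u\mid\ell}[\Ku:\Ql]\,h_u$.

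Next, put $W:=\IndKQ\chi$, a $\Ql$-linear representation of $\GQ$ of dimension $[K:\Q]$. By Mackey's formula the double cosets $\Gq\backslash\GQ/\GK$ (resp.\ $\Gl\backslash\GQ/\GK$) are indexed by the places $v\mid q$ (resp.\ $u\mid\ell$) of $K$, so that
$$
   W|_{\Gq}\ \simeq\ \bigoplus_{v\mid q}\IndKvQq\bigl(\chi|_{\Gv}\bigr),
   \qquad
   W|_{\Gl}\ \simeq\ \bigoplus_{u\mid\ell}\IndKuQl\bigl(\chi|_{\Gu}\bigr),
$$
where $\Gv$ (resp.\ $\Gu$) is the decomposition group of the corresponding place. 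The core of the proof is to establish: (a) $\IndKvQq(\chi|_{\Gv})$ is of type (W) at $q$ with $\Sigma\bigl(\Wq(\IndKvQq\chi|_{\Gv})\bigr)=[\Kv:\Qq]\,w_v$; and (b) $\IndKuQl(\chi|_{\Gu})$ is of Hodge--Tate type at $\ell$ with $\Sigma\bigl(\HTl(\IndKuQl\chi|_{\Gu})\bigr)=[\Ku:\Ql]\,h_u$. Granting (a) and (b), $W$ is of type (W) at $q$ and of Hodge--Tate type at $\ell$, with $\Sigma(\Wq(W))=\sum_{v\mid q}[\Kv:\Qq]w_v$ and $\Sigma(\HTl(W))=\sum_{u\mid\ell}[\Ku:\Ql]h_u$, and Lemma \ref{lem:Q} gives $\Sigma(\Wq(W))=2\Sigma(\HTl(W))$, which is exactly the assertion.

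To prove (a), fix a finite Galois extension $F/\Qq$ with $G_F\subseteq\Gv$ (e.g.\ the Galois closure of $\Kv/\Qq$) and restrict $\IndKvQq(\chi|_{\Gv})$ to $G_F$; since $F$ contains every $\Qq$-conjugate of $\Kv$, Mackey's formula shows this restriction is a direct sum of $[\Kv:\Qq]$ characters, each of the form $(\chi|_{\Gv})^g|_{G_F}$ with $g\in\Gq$. Now $\chi|_{\Gv}$ is of type (W) of weight $w_v$, hence so is its restriction to $G_F$ (Weil weights are unchanged by a finite extension of the base field); and conjugating by $g\in\Gq$ changes nothing, because the $g$-conjugate of a Frobenius lift in $G_F$ is again a Frobenius lift in $G_F$ (the unramified quotient of $\Gq$ is abelian, and $G_F$ is normal in $\Gq$) while type (W) and the Weil weights do not depend on the choice of Frobenius lift. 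A short argument with characteristic polynomials (a number whose $f$-th power, for $f$ the residue degree of $F/\Qq$, is a $q^f$-Weil integer of weight $w_v$ is itself a $q$-Weil integer of weight $w_v$) then upgrades this to: $\IndKvQq(\chi|_{\Gv})$ is of type (W) at $q$ with all Weil weights equal to $w_v$, which is (a). Part (b) is symmetric: restricting $\IndKuQl(\chi|_{\Gu})$ to $G_{F'}$ for a finite Galois $F'/\Ql$ with $G_{F'}\subseteq\Gu$ writes it as a sum of $[\Ku:\Ql]$ characters each $\Gl$-conjugate to $\chi|_{\Gu}$; a $\Gl$-conjugate of a Hodge--Tate character is Hodge--Tate of the same weight (the inertia subgroup is normal in $\Gl$, so conjugation preserves its restriction to inertia after untwisting by a power of the cyclotomic character), and Hodge--Tate type, together with the Hodge--Tate weights, descends along $F'/\Ql$.

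The only genuinely nontrivial point — the main obstacle — is this last step: one must verify that the induced representations satisfy the \emph{strong} forms of ``type (W)'' and ``of Hodge--Tate type'' required by Lemma \ref{lem:Q}, not merely after a finite base change, and keep careful track of the two kinds of weights through both the induction and the passage to a Galois closure. (On the Hodge--Tate side one may alternatively compute $\det\IndKuQl(\chi|_{\Gu})$ through the transfer map and local class field theory, which again yields the weight $[\Ku:\Ql]h_u$; but one still needs to know beforehand that the induced representation is of Hodge--Tate type, so that $\HTl$ is defined.)
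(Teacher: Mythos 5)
Your argument is correct and follows essentially the same route as the paper: induce from $\GK$ to $\GQ$, decompose the restrictions to $\Gq$ and $G_\ell$ by Mackey's formula, track how Weil and Hodge--Tate weights behave under local induction, and conclude with Lemma \ref{lem:Q}. Your preliminary reduction to $\det_\Ql(V)$ and the Galois-closure/conjugation verification of the weight formulas are just a more explicit rendering of what the paper asserts in one line (``by definition of the induced representation and by the invariance of the weights under finite extension of the base field''), so they change nothing essential.
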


\begin{proof}
The induced representation  $\IndKQ(V)$  is a representation 
of  $\GQ$  which is 
of type (W) at  $q$  and 
of Hodge-Tate type at  $\ell$, and hence we have 
$$
   \Sigma(\Wq(\IndKQ(V)))\ =\ 2\Sigma(\HTl(\IndKQ(V)))
$$
by Lemma \ref{lem:Q}. 
We then observe that
\begin{align*}
    \Wq(\IndKQ(V))\ &=\ \coprod_{v|q}   [\Kv:\Qq]\Wv(V), \\
   \HTl(\IndKQ(V))\ &=\ \coprod_{u|\ell}[\Ku:\Ql]\HTl(V),
\end{align*}
where the multiple  $mX$  of a multi-set  $X$  
by a positive integer $m$  is defined in the obvious manner.
Indeed,  we have 
$$
   (\IndKQ(V))|_{\Gq}\ =\ \bigoplus_{v|q}\IndKvQq(V|_{\Gv})
$$
by Mackey's formula (\cite{Serre:RL}, Section 7.3, Proposition 22), and 
$$
   \Wq(\IndKvQq(V|_{\Gv}))\ =\ [\Kv:\Qq]\Wv(V|_{\Gv})
$$
by definition of the induced representation and 
by the invariance of the Weil weights by finite extensions of 
the base field. 
Similar equalities hold for  $u|\ell$  and  
$\IndKuQl(V|_{\Gu})$.
\end{proof}

\section{Proof of the theorems}\label{sec:proof}

We begin with a version of the gap principle:

\begin{lemma}\label{lem}
Let  
$E,n,v$  be as before, and let  
$w\in\R_{\geq 0}$  be given.
Then there exists a constant  
$C_1=C_1([E:\mathbb{Q}],n,\qv^w)>0$  such that, for any prime  
$\ell>C_1$  and for any 
$n$-dimensional $\El$-linear representations  $V,V'$  of  $\Gv$ 
which are of type (W), $E$-integral at  $v$  and such that  
$\Sigma(\Wv(V)), \Sigma(\Wv(V'))$ are in $[0,[\El:\Ql]\cdot w]$,  
the following (i) and (ii) hold:

\sn
(i) 
If  $V\congss V'\pmodl$  as $\Gv$-representations, then  
$\Wv(V)=\Wv(V')$.   

\sn
(ii)  
Assume further that  $V^\ss$  and  $(V')^\ss$  are unramified. 
If  $V\congss V'\pmodl$  as $\Gv$-representations, then  
$V\simeqss V'$  as $\Gv$-representations.   

\smallskip
The constant  $C_1$  can be taken explicitly to be
$$
   C_1:=\left(2{n\choose[n/2]}\qv^{w/2}\right)^{[E:\Q]/\fl}.
$$
\end{lemma}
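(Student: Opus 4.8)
The plan is to exploit the contrast between the archimedean size of Frobenius eigenvalues (controlled by the Weil weights, hence by $w$) and the fact that a mod-$\ell$ congruence forces a divisibility by $\ell$ of the difference of the two characteristic polynomials. First I would reduce to comparing characteristic polynomials: after passing to a common finite totally ramified extension $\Kvp/\Kv$ (available by Lemma \ref{lem:elimination}) over which the inertia acts unipotently on both $V$ and $V'$, the semisimplifications $\Vss$, $(V')^\ss$ become unramified as $\Gvp$-modules, and by $E$-integrality the polynomials $P'(T)=\det(T-\Frobvp\mid\Vss)$ and $P''(T)=\det(T-\Frobvp\mid(V')^\ss)$ have coefficients in $\OE$. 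The hypothesis $V\congss V'\pmodl$ says precisely that $\Vss\otimes\kl$ and $(V')^\ss\otimes\kl$ are isomorphic as $\Gvp$-modules (the congruence is insensitive to the totally ramified base change, since a pro-$\ell$ group has no nontrivial action on an $\Flh$-space in the relevant sense), so $P'(T)\equiv P''(T)$ coefficient-by-coefficient modulo the prime $\l$ of $\OE$ above $\ell$. Since both polynomials lie in $\OE[T]$ and their degree-$n$ coefficients are both $1$, the difference $P'(T)-P''(T)$ lies in $\l\OE[T]$.

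Next I would bound the coefficients archimedeanly. Each coefficient of $P'(T)$ is, up to sign, an elementary symmetric function of the $\qv$-Weil integers which are the roots; the $j$-th such function is a sum of $\binom{n}{j}$ products of $j$ roots, and by the Weil-integrality each root has absolute value $\qv^{(\text{weight})/2}$ under every embedding $\Qbar\incl\C$. Using $\Sigma(\Wv(V))\le[\El:\Ql]\cdot w$ together with the fact that the $E$-integrality forces the Weil weights to be $\ge 0$, one gets that every root has absolute value at most $\qv^{w/2}$ after the appropriate normalization — more precisely, running the argument over $E$ rather than over $\Ql$ one sees that each conjugate coefficient of $P'(T)$ has absolute value at most $\binom{n}{[n/2]}\qv^{w[E:\Q]/(2\fl)}$ ... here the bookkeeping with $[\El:\Ql]$, $\fl$ and the number of places of $E$ above $\ell$ is exactly what produces the exponent $[E:\Q]/\fl$ in $C_1$. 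Hence the same bound (doubled, to account for the difference of two such quantities) holds for every archimedean absolute value of every coefficient of $P'(T)-P''(T)\in\OE[T]$. A nonzero element of $\OE$ all of whose archimedean absolute values are $<\ell\le$ its $\l$-adic size forces ... more simply: a nonzero element of $\l\OE$ has norm divisible by $\ell^{\fl}$, while an algebraic integer all of whose conjugates have absolute value $<\left(2\binom{n}{[n/2]}\qv^{w/2}\right)^{[E:\Q]/\fl}$ ... choosing $C_1$ as in the statement makes $\ell^{\fl}$ strictly exceed the product of the conjugate absolute values, a contradiction. Therefore $P'(T)=P''(T)$.

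From $P'(T)=P''(T)$ part (i) is immediate: the multi-set of roots is the multi-set of Weil numbers whose weights constitute $\Wv(V)$ and $\Wv(V')$ respectively (this uses the invariance of Weil weights under the finite extension $\Kvp/\Kv$, already recorded after Definition \ref{def:Weil}), so the two multi-sets coincide. For part (ii), under the extra hypothesis that $\Vss$ and $(V')^\ss$ are already unramified over $\Kv$ itself, the $\Gv$-action on each factors through the procyclic group $\Gkv$ generated by $\Frobv$; a semisimple representation of a procyclic group is determined up to isomorphism by the characteristic polynomial of the generator, so $P'(T)=P''(T)$ (now with $v'=v$) gives $\Vss\simeq(V')^\ss$ as $\El[\Gv]$-modules, i.e. $V\simeqss V'$ as $\Gv$-representations.

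The main obstacle I anticipate is purely bookkeeping rather than conceptual: keeping straight the three layers of field extensions — the residue-degree factor $\fl=f(\l/\ell)$, the global degree $[E:\Q]$, and the splitting of $\ell$ in $\OE$ — so that the archimedean estimate on the conjugates of a coefficient of $P'(T)-P''(T)$ is compared against the correct lower bound $\ell^{\fl}$ for the norm of a nonzero element of $\l$, and so that the exponent $[E:\Q]/\fl$ in the final constant $C_1$ comes out exactly. A secondary point requiring care is the claim that the mod-$\ell$ congruence is unaffected by the totally ramified base change $\Kvp/\Kv$ — this rests on the kernel of $\Iv\to\Ivtame$ being pro-$\ell$ and hence acting trivially on the $\kl$-semisimplification, which is why working with $\Tbar^\ss$ rather than $\Tbar$ is essential throughout.
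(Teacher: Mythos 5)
Your overall route coincides with the paper's: pass to a totally ramified extension $\Kvp/\Kv$ (Lemma \ref{lem:elimination}), compare the degree-$n$ characteristic polynomials $P',P''\in\OE[T]$ of $\Frobvp$ acting on the $\El$-linear semisimplifications, deduce $P'\equiv P''\pmodl$ from the mod-$\l$ congruence, and exclude a nonzero difference by playing archimedean bounds against the norm of an element of $\l\OE$; parts (i) and (ii) then follow as you say. However, the quantitative step --- the only place where the explicit constant $C_1$ is actually produced --- is wrong as written. The correct per-embedding bound on the coefficient of $T^{n-i}$ in $P'(T)$ is $\binom{n}{i}\qv^{w/2}$, with \emph{no} $[E:\Q]/\fl$ in the exponent of $\qv$: the hypothesis $\Sigma(\Wv(V))\le[\El:\Ql]\cdot w$ refers to $V$ viewed as a $\Ql$-linear representation, whose Frobenius characteristic polynomial is the product of the $[\El:\Ql]$ conjugates of $P'$, and since the weight of a Weil integer is the same for every embedding, the $n$ roots of $P'$ themselves have nonnegative weights (nonnegativity coming from $E$-integrality) summing to at most $w$; each elementary symmetric function is then a sum of $\binom{n}{i}$ products of absolute value at most $\qv^{w/2}$. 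The exponent $[E:\Q]/\fl$ enters only afterwards, at the norm comparison: a nonzero coefficient $a$ of $P'-P''$ lies in $\l\OE$, so $\ell^{\fl}\le|N_{E/\Q}(a)|$, while $|N_{E/\Q}(a)|\le\bigl(2\binom{n}{[n/2]}\qv^{w/2}\bigr)^{[E:\Q]}$, whence $\ell\le C_1$. Your two versions of the estimate --- ``each conjugate coefficient is at most $\binom{n}{[n/2]}\qv^{w[E:\Q]/(2\fl)}$'', and later ``all conjugates are $<\bigl(2\binom{n}{[n/2]}\qv^{w/2}\bigr)^{[E:\Q]/\fl}$'' --- do not close the inequality for $\ell>C_1$ once $[E:\Q]>\fl$: with either bound the contradiction only appears for $\ell$ of size roughly $\bigl(2\binom{n}{[n/2]}\qv^{w/2}\bigr)^{([E:\Q]/\fl)^2}$, not $C_1$. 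So as it stands the lemma is proved only with a larger constant; the fix is precisely the bookkeeping above.

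A secondary correction: your justification that the congruence is ``insensitive to the totally ramified base change'' because the kernel of $\Iv\to\Ivtame$ is pro-$\ell$ is both inaccurate (at $v\nmid\ell$ the wild inertia is pro-$p$ for the residue characteristic $p$ of $v$, not pro-$\ell$) and unnecessary: the hypothesis $V\congss V'\pmodl$ is an isomorphism of $\Gv$-representations, $\Frobvp$ lies in $\Gv$, and for any $g\in\Gv$ one has $\det(T-g\mid V)\equiv\det\bigl(T-g\mid(T/\l T)^\ss\bigr)\pmodl$ together with $\det(T-\Frobvp\mid V)=\det(T-\Frobvp\mid\Vss)$, which gives $P'\equiv P''\pmodl$ directly. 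The remaining steps --- deducing $\Wv(V)=\Wv(V')$ from $P'=P''$ via invariance of Weil weights under finite extension, and in (ii) using that an unramified semisimple representation of $\Gv$ is determined by the characteristic polynomial of $\Frobv$ --- agree with the paper and are fine.
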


We have also the following mod $\ell$ version of (ii) above, 
in which the constant is independent of  $[E:\Q]$:

\begin{lemma}\label{lem'}
Let  
$E,n,v$  be as before, and let  
$w\in\R_{\geq 0}$  be given.
Then there exists a constant  $\tilde{C}_1=\tilde{C}_1(n,\qv^w)>0$  
such that, for any prime  $\ell>C_1$  and for any 
$n$-dimensional $\El$-linear representations  $V,V'$  of  $\Gv$  
such that  $V^\ss,(V')^\ss$  are unramified and  
which are of type (W), $E$-integral at  $v$  and such that  
$\Sigma(\Wv(V)), \Sigma(\Wv(V'))$ are in $[0,[\El:\Ql]\cdot w]$,
the following holds: If  
$\det(T-\Frobv |V)\equiv 
 \det(T-\Frobv |V')\ \pmod{\ell\OE})$, then one has  
$V\simeqss V'$  as $\Gv$-representations. 

The constant  $\tilde{C}_1$  can be taken explicitly to be
$$
   \tilde{C}_1:=2{n\choose[n/2]}\qv^{w/2}.
$$
\end{lemma}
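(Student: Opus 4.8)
\textbf{Proof proposal for Lemma \ref{lem'}.}
The plan is to promote the mod-$\ell$ congruence of characteristic polynomials to an honest \emph{equality} of characteristic polynomials, and then to invoke the standard fact that a semisimple \emph{unramified} representation of $\Gv$ is determined, up to isomorphism, by the characteristic polynomial of $\Frobv$ (over any field, a characteristic polynomial pins down a semisimple matrix up to conjugacy, and an unramified semisimple representation is a representation of the procyclic group $\Gkv$). Since $\Vss$ and $(V')^\ss$ are assumed unramified, the inertia acts unipotently on $V$ and $V'$, so in Definition \ref{def:E-int} one may take $\Kvp=\Kv$; hence $P(T):=\det(T-\Frobv|V)=\det(T-\Frobv|\Vss)$ and, likewise, $P'(T):=\det(T-\Frobv|V')=\det(T-\Frobv|(V')^\ss)$ both lie in $\OE[T]$. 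By type (W) the roots of $P$ and $P'$ are $\qv$-Weil integers, and since $V,V'$ are $E$-integral these Weil weights are $\geq0$ (as noted after Definition \ref{def:E-int}). Writing $P(T)=\sum_{i=0}^n(-1)^ia_iT^{n-i}$ and $P'(T)=\sum_{i=0}^n(-1)^ia_i'T^{n-i}$ with $a_i,a_i'\in\OE$, the hypothesis reads $a_i\equiv a_i'\pmod{\ell\OE}$ for every $i$.

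First I would bound $|\sigma(a_i)|$ for every field embedding $\sigma\colon E\incl\C$. Let $\alpha_1,\dots,\alpha_n\in\Qbar$ be the roots of $P(T)$, with Weil weights $w_1,\dots,w_n\geq0$. Passing from $\El$-coefficients to $\Ql$-coefficients replaces the multi-set of roots, hence of Weil weights, by $[\El:\Ql]$ copies of itself, so $\Sigma(\Wv(V))=[\El:\Ql]\sum_jw_j$; thus the hypothesis $\Sigma(\Wv(V))\leq[\El:\Ql]\cdot w$ yields $\sum_jw_j\leq w$. Fixing an extension of $\sigma$ to $\Qbar$ and using $|\sigma(\alpha_j)|=\qv^{w_j/2}$, the number $\sigma(a_i)$ is $\pm$ the $i$-th elementary symmetric function of the $\sigma(\alpha_j)$, so
$$
   |\sigma(a_i)|\ \leq\ \sum_{\#S=i}\ \prod_{j\in S}\qv^{w_j/2}\ \leq\ {n\choose i}\qv^{(\sum_jw_j)/2}\ \leq\ {n\choose[n/2]}\qv^{w/2},
$$
using $w_j\geq0$ and ${n\choose i}\leq{n\choose[n/2]}$. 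The same bound holds for $a_i'$, so $|\sigma(a_i-a_i')|\leq 2{n\choose[n/2]}\qv^{w/2}=\tilde C_1$ for every $\sigma$.

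Next I would run the product-formula argument to force $a_i=a_i'$. If $a_i\neq a_i'$ for some $i$, then $\beta:=(a_i-a_i')/\ell$ is a nonzero algebraic integer in $E$ (here the mod-$\ell\OE$ hypothesis is used), so $|N_{E/\Q}(\beta)|\geq1$, whence
$$
   \ell^{[E:\Q]}\ \leq\ |N_{E/\Q}(a_i-a_i')|\ =\ \prod_{\sigma\colon E\incl\C}|\sigma(a_i-a_i')|\ \leq\ \tilde C_1^{[E:\Q]};
$$
thus $\ell\leq\tilde C_1$, contradicting $\ell>C_1\ (\geq\tilde C_1)$. Therefore $P(T)=P'(T)$, and since $\Vss$ and $(V')^\ss$ are semisimple and unramified this forces $\Vss\simeq(V')^\ss$, i.e.\ $V\simeqss V'$ as $\Gv$-representations, as desired.

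I do not expect a genuine obstacle: the argument is a gap-principle estimate together with an elementary-symmetric-function bound. The one point needing a little care is the bookkeeping in the second paragraph, relating $\Wv(V)$—defined via the $\Ql$-linear structure—to the Weil weights of the roots of the $\El$-linear polynomial $P(T)$. It is also worth recording why $\tilde C_1$ is free of $[E:\Q]$, unlike the constant $C_1$ of Lemma \ref{lem}: working modulo the rational prime $\ell$ (rather than modulo $\l$) lets us divide $a_i-a_i'$ by $\ell$ itself, so the exponent $[E:\Q]$ appears on both sides of $\ell^{[E:\Q]}\leq\tilde C_1^{[E:\Q]}$ and cancels; a mod-$\l$ hypothesis would give only $\ell^{\fl}\leq\tilde C_1^{[E:\Q]}$, producing the exponent $[E:\Q]/\fl$ of $C_1$.
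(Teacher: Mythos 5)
Your proposal is correct and follows essentially the same route as the paper, which proves Lemma \ref{lem} in detail (coefficient bounds via Weil weights, the norm/gap-principle lemma with its mod-$\ell\OE$ case giving $\ell\leq C_0$ without the exponent $[E:\Q]/\fl$, and the fact that unramified semisimple $\Gv$-representations are determined by the characteristic polynomial of $\Frobv$) and declares the proof of Lemma \ref{lem'} to be similar. The only cosmetic point is that the roots of the $\Ql$-linear characteristic polynomial are the Galois conjugates of the roots of the $\El$-linear one rather than literal copies, but since conjugate Weil integers have the same weight your weight bookkeeping, and hence the bound $\tilde{C}_1$, is exactly as in the paper.
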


\begin{proof}
As the proofs are similar, we only give a proof of 
Lemma \ref{lem}. 
Choose a totally ramified extension  $\Kvp/\Kv$  over which  
$V$  and  $V'$  become semistable (cf.\ Lem.\ \ref{lem:elimination}).
Let  
$P (T)=\det(T-\Frobvp|\ \Vss)$  and 
$P'(T)=\det(T-\Frobvp|\ (V')^\ss)$  
be the characteristic polynomials (taken as $\El$-linear representations)
of the Frobenius  $\Frobvp$  at  $v'$      
acting on the semisimplifications  $\Vss$  and  $(V')^\ss$, respectively. 
By assumption, they have coefficients in  $\OE$. 
By assumption on the weights, 
for any embedding  $E\incl\C$,  
the terms of  $T^{n-i}$  have coefficients of absolute value 
$\leq{n\choose i}\qv^{w/2}$ 
Note that  $\Sigma(\Wv(V))$  is the sum of the Weil weights of  
$V$  as a $\Ql$-linear representation, and hence the sum of 
the Weil weights of the roots of  $P(T)$  is in  $[0,w]$). 
Set  
$C_1:=(2\max_{0\leq i\leq n}{n\choose i}\qv^{w/2})^{[E:\Q]/\fl}
     =(2{n\choose [n/2]}\qv^{w/2})^{[E:\Q]/\fl}$. 
Then if  $\ell>C_1$, we have
\begin{align*}
			   		     &V\    \equivss\ V'    \pmodl\quad\text{as $\Gv$-representations} \\
    \Longleftrightarrow\ &P(T)\ \equiv  \ P'(T) \pmodl           \\
    \Longleftrightarrow\ &P(T)\ =       \ P'(T).               
\end{align*}
Here, the last equivalence follows from the next lemma.
This implies that  $\Wv(V)=\Wv(V')$. 
If  $V^\ss$  and  $(V')^\ss$  are unramified, then 
they are determined by the actions of  $\Frobv$, and hence 
the equality  $P(T)=P'(T)$  is equivalent to  
$V\simeqss V'$. 
\end{proof}

\begin{lemma}
Let  
$a$  be a non-zero integer of  $E$  and 
$C_0$  a real number  $>0$. 
If  $a\equiv 0\pmod\l$  $($resp.\ $a\equiv 0\ (\mathrm{mod}\ \ell \mathcal{O}_E))$
and  
$|\iota(a)|\leq C_0$  for any embedding  $\iota:E\incl\C$, 
then we have  
$\ell\leq C_0^{[E:\Q]/\fl}$
$($resp.\ $\ell\leq C_0)$.
\end{lemma}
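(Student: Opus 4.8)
The plan is to prove the elementary norm estimate that closes out the proof of Lemma~\ref{lem} (and, with $\l$ replaced by $\ell\OE$, Lemma~\ref{lem'}). Suppose $a\in\OE$ is nonzero and $a\equiv 0\pmod{\l}$. The key observation is that the absolute norm $N_{E/\Q}(a)=\prod_{\iota}\iota(a)$, taken over the $[E:\Q]$ complex embeddings $\iota\colon E\incl\C$, is a nonzero rational integer, hence satisfies $|N_{E/\Q}(a)|\geq 1$. On the other hand, the hypothesis $|\iota(a)|\leq C_0$ for all $\iota$ gives $|N_{E/\Q}(a)|\leq C_0^{[E:\Q]}$.

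The next step is to extract the divisibility information. Since $a\in\l$, the ideal $(a)$ is divisible by $\l$, so $N_{E/\Q}((a))$ is divisible by $N_{E/\Q}(\l)=\ell^{\fl}$, where $\fl$ is the absolute residue degree of $\l$. As $|N_{E/\Q}(a)|$ is exactly the absolute norm of the ideal $(a)$, we get $\ell^{\fl}\mid |N_{E/\Q}(a)|$, and in particular $\ell^{\fl}\leq|N_{E/\Q}(a)|$. Combining this with the upper bound from the previous step yields $\ell^{\fl}\leq C_0^{[E:\Q]}$, that is, $\ell\leq C_0^{[E:\Q]/\fl}$, which is the first assertion.

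For the parenthetical statement, assume instead $a\equiv 0\pmod{\ell\OE}$, i.e.\ $a\in\ell\OE$. Then $(a)$ is divisible by the ideal $\ell\OE$, whose absolute norm is $\ell^{[E:\Q]}$, so $\ell^{[E:\Q]}\leq|N_{E/\Q}(a)|\leq C_0^{[E:\Q]}$, giving $\ell\leq C_0$ after taking $[E:\Q]$-th roots.

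I do not expect any serious obstacle here: the argument is a routine application of the multiplicativity of the norm on ideals together with the fact that a nonzero algebraic integer has norm of absolute value at least $1$. The only point requiring a little care is bookkeeping the residue degree $\fl$ versus $[E:\Q]$ correctly, and making sure the norm of the ideal $(a)$ really is $\prod_\iota|\iota(a)|$ (this is standard, since $|N_{E/\Q}(a)|$ equals the cardinality of $\OE/(a)$ when $a\neq 0$). One should also note that the case $\l\mid a$ subsumes the parenthetical case only up to the exponent, which is why the two bounds differ; writing the two cases in parallel, as above, keeps the exposition clean.
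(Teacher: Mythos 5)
Your proof is correct and follows essentially the same route as the paper: take the norm to get $\ell^{\fl}\leq |N_{E/\Q}(a)|$ (resp.\ $\ell^{[E:\Q]}\leq |N_{E/\Q}(a)|$) from the divisibility hypothesis, and $|N_{E/\Q}(a)|\leq C_0^{[E:\Q]}$ from the archimedean bounds, then compare. The extra remarks (norm of the ideal $(a)$, $|N_{E/\Q}(a)|\geq 1$) are harmless elaborations of the same argument.
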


\begin{proof}
If  $\l|a$ (resp.\ $\ell|a$ in $\mathcal{O}_E$), 
then by taking the norm  $N:E^\times\to\Q^\times$, 
we have  
$\ell^\fl\leq |N(a)|$
(resp.\ $\ell^{[E:\mathbb{Q}]}\leq |N(a)|$). 
If  $|\iota(a)|\leq C_0$, then by taking the norm 
(or product over all  $\iota$), we have  
$|N(a)|\leq C_0^{[E:\Q]}$. 
The required inequality follows from these two inequalities.
\end{proof}

We need one more lemma:

\begin{lemma}\label{lem:modell}
Let  $G$  be a profinite group and  
$T,T'$  be free  $\OEl$-modules on which  
$G$  acts continuously and $\OEl$-linearly. Let  
$(T/\l T)^\ss$  and  
$(T/\ell T)^\ss$  be the semisimplifications of  
$T/\l T$  and  
$T/\ell T$  as  $\kl[G]$-modules, respectively. 
Let  $e$  be the ramification index of  $\El/\Ql$. 
Then we have:

\sn
(i) 
$(T/\ell T)^\ss$  is isomorphic to the direct-sum of  $e$  copies of  
$(T/\l T)^\ss$. 

\sn
(ii)
If  
$(T/\l T)^\ss\simeq(T'/\l T')^\ss$, then  
$(T/\ell T)^\ss\simeq(T'/\ell T')^\ss$.
\end{lemma}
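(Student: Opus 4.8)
The plan is to prove (i) by equipping $T/\ell T$ with an explicit $G$-stable filtration whose successive quotients are each a copy of $T/\l T$, and then to read off (ii) formally from (i).

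First I would fix the ring-theoretic setup. Since $e$ is the ramification index of $\El/\Ql$, the ideal $\ell\OEl$ is the $e$-th power of the maximal ideal $\l\OEl$, so if $\pi$ is a uniformizer of $\OEl$ then $\ell\OEl=\pi^e\OEl$ and $\l\OEl=\pi\OEl$. Thus $T/\ell T=T/\pi^e T$ carries the descending $\pi$-adic filtration
\[
   T/\pi^e T\ \supseteq\ \pi T/\pi^e T\ \supseteq\ \cdots\ \supseteq\ \pi^{e-1}T/\pi^e T\ \supseteq\ 0,
\]
each term of which is $G$-stable because $G$ acts $\OEl$-linearly. For $0\le i\le e-1$, multiplication by $\pi^i$ induces an isomorphism $T/\pi T\xrightarrow{\ \sim\ }\pi^iT/\pi^{i+1}T$ which is $G$-equivariant (again because the $G$-action is $\OEl$-linear), and $T/\pi T=T/\l T$ is $\kl$-free since $T$ is $\OEl$-free. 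Hence $T/\ell T$, as a module over $\OEl/\ell\OEl[G]$, admits a $G$-stable filtration with exactly $e$ graded pieces, each isomorphic to $T/\l T$ as a $\kl[G]$-module.

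Next I would invoke Jordan--H\"older. The multiset of composition factors of $T/\ell T$ is the union, over the $e$ graded pieces, of the composition factors of each; moreover, since the maximal ideal of the local Artinian ring $\OEl/\ell\OEl$ is nilpotent, it — hence the ideal it generates in $\OEl/\ell\OEl[G]$ — annihilates every simple module, so each such composition factor is in fact a simple $\kl[G]$-module. Therefore the composition factors of $T/\ell T$ are precisely those of $T/\l T$, each with multiplicity multiplied by $e$, and summing them gives $(T/\ell T)^\ss\simeq\bigoplus_{i=1}^{e}(T/\l T)^\ss$. This is (i). Statement (ii) is then immediate: if $(T/\l T)^\ss\simeq(T'/\l T')^\ss$, then by (i) one has $(T/\ell T)^\ss\simeq\bigoplus_{i=1}^{e}(T/\l T)^\ss\simeq\bigoplus_{i=1}^{e}(T'/\l T')^\ss\simeq(T'/\ell T')^\ss$.

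The argument is essentially bookkeeping, so there is no genuine obstacle; the one point that needs care is keeping straight which ring each module lives over — in particular, checking that multiplication by $\pi^i$ is $G$-equivariant, and that the graded pieces of the $\pi$-adic filtration (and the composition factors of $T/\ell T$) really are $\kl[G]$-modules, so that the phrase ``semisimplification as a $\kl[G]$-module'' applies to them.
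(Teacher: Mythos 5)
Your proof is correct and follows essentially the same route as the paper: the $\pi$-adic (i.e.\ $\l$-adic) filtration of $T/\ell T$ with $G$-equivariant multiplication-by-uniformizer isomorphisms identifying each graded piece with $T/\l T$, from which (i) follows by passing to composition factors and (ii) follows formally. Your extra remarks on Jordan--H\"older and on why the composition factors of $T/\ell T$ are genuinely $\kl[G]$-modules simply make explicit what the paper leaves implicit.
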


\begin{proof}
Part (ii) follows from Part (i) immediately. 
To prove (i), consider the filtration 
$$
    T/\ell T=T/\l^e T\ \supset\ \l T/\l^eT\ \supset\ \cdots
                     \ \supset\ \l^eT/\l^eT=0.
$$
Then 
``multiplication by $\l$'' 
(where  $\l$  is identified with a uniformizer at  $\l$)
induces isomorphisms 
$\l^i    T/\l^{i+1}T\to
 \l^{i+1}T/\l^{i+2}T$  
of the graded quotients as  $\kl[G]$-modules. 
It then follows that  
$(T/\ell T)^\ss\simeq((T/\l T)^\ss)^{\oplus e}$. 
\end{proof}

Now we can prove the theorems. 
We only prove Theorem \ref{thm:main} and \ref{thm:main2}, 
the proof of Theorem \ref{thm:main3} being similar.  
Let 
$C\:=\ \max\{e^2b+1,(2{n\choose[n/2]}\qv^{nb})^{[E:\Q]/\fl}\}$, 
as in Theorem \ref{thm:main}. 
Choose a finite totally ramified extension $K'_{u'}/K_u$, with absolute ramification
index $e^2$,
over which $V$ and $V'$ become semistable
(cf.\ Lem.\ \ref{lem:elimination}).
If  $\ell>C$, then  $e^2b<\ell-1$. 
Take $K'$ a finite extension of $K$ and $u'|u$ a place of $K'$ such that
the completion of $K'$ at $u'$ is $K'_{u'}$.
By assumption, we have  
$\HT_{u'}(V)\subset[0,b]$. 
Then by (i) of the Proposition \ref{prop:tame inertia}, we have 
$\TI_{u'}(V)\subset[0,b]$. The same holds for  $V'$, since we have
$\TI_{u'}(V)=\TI_{u'}(V')$   by the assumption  
$V\equivss V'\pmodl$  as $\Gu$-representations 
(Note that, by Lemma \ref{lem:modell}, we have also  
$V\equivss V'\pmod\ell$  as $\Fl[\Gu]$-modules, 
where  $V$  and  $V'$  are now regarded as $\Ql$-linear representations, 
so that the definition of  $\TIu$  and Proposition \ref{prop:tame inertia} are applicable).
Now we recall that $V$ and $V'$ are of type (G).
By (ii) of Proposition \ref{prop:tame inertia}, 
we have  
$\Sigma(\TI_{u'}(V)) 
=\Sigma(\HT_{u'}(V)) 
=\Sigma(\HTu(V)) 
=(1/2)\Sigma(\Wv(V))$, and these are also equal to 
$\Sigma(\TI_{u'}(V')) 
=\Sigma(\HT_{u'}(V')) 
=\Sigma(\HT_u(V')) 
=(1/2)\Sigma(\Wv(V'))$. 
Since  $\HTu(V)\subset[0,b]$, 
these are bounded by  $[\El:\Ql]\cdot nb$. 
In particular, total weights  
$\Sigma(\Wv(V))$  and  $\Sigma(\Wv(V'))$  
are  $\leq [\El:\Ql]\cdot 2nb$.
By (i) (resp.\ (ii)) of Lemma \ref{lem}, the assumption that  
$V\equivss V'\pmodl$  as $\Gv$-representations implies that   
$\Wv(V)=\Wv(V')$  (resp.\ 
$V\simeqss V'$  as $\Gv$-representations) if  
$\ell>(2{n\choose[n/2]}\qv^{nb})^{[E:\Q]/\fl}$.
\qed

\section{Corollaries}\label{sect:cor}

Here we give several corollaries of Theorem \ref{thm:main2}, 
which are motivated by a conjecture of Rasmussen and Tamagawa 
(\cite{Ras-Tama}; see also \cite{Bourdon}, \cite{Ozeki}, \cite{Ozeki2} and 
\cite{Ras-Tama2}). 
The notations ($K$, $E$, $n$, $b$, $e$, $v$, $u$, $\ell,\lambda$, 
$C=C([E:\Q],n,b,e,\qv)$, ...) 
are the same as in the theorem. 
In this section, 
$V=\VXr$  will be the $\El$-linear dual  $\Hetr(\XKbar,\El)^*$
of the $r$-th $\lambda$-adic \'etale cohomology group, where  
$X$  is a smooth proper variety 
(variety := separated scheme of finite type over a field) 
over  $K$  and  $\XKbar$  denotes its base extension to  $\Kbar$. 
We set  $\Vbar=\VbarXr:=T/\lambda T$, 
choosing a $\GK$-stable $\OEl$-lattice in  $V$, and let  
$\Vbarss=\VbarssXr$  be its semisimplification as a $\kl[\GK]$-module  
($\VbarssXr$  does not depend on the choice of  $T$).  
To state the first corollary, 
we make the following hypothesis on  $\Vbarss$:

\mn
{\it Hypothesis} (H). 
Each simple factor  
$\Wbar$  of  $\Vbarss$  lifts to an $\El$-linear representation    
$W$  of  $\GK$  of the form  
$\Hets(\YKbar,\El)^*$  
which is semistable 
at all  $u\mid\ell$  and  $\HTu(W)\subset[0,\ell-2]$, 
where  $Y$  is a proper smooth variety over  $K$  
and  $s$  is some non-negative integer.

\begin{corollary}\label{cor:H}
For any prime  $\ell>C$, 
any {\rm odd} integer  $r$  with  $1\leq r\leq b$,   
any places  $u$  of  $K$  and  $\l$  of  $E$  
both lying above  $\ell$, and any 
smooth proper variety  $X$  which has  
the $r$-th Betti number  $\leq n$, 
has potentially good reduction at  $v$, 
and has semistable reduction at some place  $u\mid\ell$, 
if (H) is true for  $\VbarssXr$,  
then none of the simple factors of  $\VbarssXr$  are of odd dimension.
\end{corollary}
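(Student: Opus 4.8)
The plan is to compare the simple factor $\Wbar$ with its lift $W = \Hets(\YKbar,\El)^*$ provided by Hypothesis (H), and to play off the parity constraints on Weil weights coming from Proposition \ref{prop:(G)} against the dimension of $\Wbar$. First I would fix a simple factor $\Wbar$ of $\VbarssXr$ and, using (H), choose the lift $W$, which is a geometric representation: semistable at all $u\mid\ell$ with $\HTu(W)\subset[0,\ell-2]$, of type (W) at $v$ (since $Y$ has potentially good reduction at $v$ after base change, or more generally by the Weil conjectures $W$ is of type (W)), and by Proposition \ref{prop:(G)}(ii) of type (G). The key point is that one can then apply Theorem \ref{thm:main2}, comparing $V=\VXr$ against a representation built from $W$: by construction $W$ reduces to $\Wbar$, a simple factor of $\Vbarss$, so one gets a congruence mod $\lambda$ between suitable pieces, and hence equality of the Weil-weight multisets $\Wv$ on the nose.

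Next I would extract the parity statement. Since $r$ is odd, Proposition \ref{prop:(G)}(i) applied to $V=\VXr$ shows $\det(V)$ is $\Ql(nr/2)$ up to a character of order at most $2$, with the character trivial; hence the Weil weight of $\det_\Ql(V)$ is $nr$, and more importantly each Weil weight of $V$ at $v$ is equal to $r$ (good, or potentially good, reduction at $v$ forces all Frobenius eigenvalues to be $\qv$-Weil integers of weight exactly $r$). Restricting to the simple factor: the total weight $w(\Wbar\text{'s lift }W)$, i.e. $\Sigma(\Wv(W))$, equals $(\dim W)\cdot r$, and by $W$ of type (G), $\Sigma(\Wv(W)) = 2\Sigma(\HTu(W))$ is an even integer times... — here is the crux. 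If $\dim\Wbar$ is odd, then since $r$ is odd, $(\dim\Wbar)\cdot r$ is odd, contradicting that $2\Sigma(\HTu(W))$ is even. This is the contradiction that rules out odd-dimensional simple factors.

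The main obstacle I anticipate is the bookkeeping connecting $\VbarssXr$ to its lifts at the level where Theorem \ref{thm:main2} applies: one must check that the congruence hypotheses of Theorem \ref{thm:main2} (congruence as $\Gu$-representations and as $\Gv$-representations, with the Hodge--Tate and ramification-index bounds $\HTu\subset[0,b]$ and $e(\Kup/\Ku)\mid e$) really do hold for the lift $W$, since a priori $W = \Hets(\YKbar,\El)^*$ with $s$ unrelated to $r$ and a possibly different Betti number. The cleanest route is probably not to invoke Theorem \ref{thm:main2} directly on $V$ versus $W$, but rather: the equality of Weil weights $\Wv$ that we need for $\Wbar$ follows just from Lemma \ref{lem}(i) (the gap principle), applied to $\Wbar$ viewed as living inside both $V$ and the reduction of $W$ — both are of type (W), $E$-integral at $v$, with bounded total weight once $\ell > C$. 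Then the weight-$r$ property of $V$ transports to $W$, and the parity argument of the previous paragraph closes the proof. The delicate step is verifying the $E$-integrality and total-weight bounds for $W$; these follow from $\HTu(W)\subset[0,\ell-2]$ together with Proposition \ref{prop:tame inertia} exactly as in the proof of Theorem \ref{thm:main}, provided $\ell > C$ so that $e^2(\ell-2)\cdot(\text{something})$ stays in the admissible range — this is where I expect the argument to require the most care in matching constants.
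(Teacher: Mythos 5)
Your parity endgame is essentially the paper's: all Weil weights of $V=\VXr$ at $v$ equal the odd number $r$ because $X$ has potentially good reduction at $v$, while an odd-dimensional geometric lift must have even total weight (via type (G), equivalently the evenness of odd-degree Betti numbers), and equality of Weil-weight multisets then gives the contradiction. The genuine gap is in how you obtain that equality for the lift. Both Theorem \ref{thm:main2} and Lemma \ref{lem}(i) require a congruence $V\congss V'\pmodl$ between two representations \emph{of the same dimension} (isomorphic mod-$\l$ semisimplifications, i.e.\ congruent characteristic polynomials). A single lift $W$ of one simple factor $\Wbar$ does not satisfy this with $V$: unless $\VbarssXr$ is itself simple, $\dim W<n$, and the fact that $\Wbar$ is a \emph{common constituent} of the two reductions is strictly weaker than the hypothesis of the gap principle. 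So your proposed ``cleanest route'' --- applying Lemma \ref{lem}(i) to ``$\Wbar$ viewed as living inside both $V$ and the reduction of $W$'' --- does not typecheck: there is no pair of $\El$-linear representations with congruent reductions to feed into the lemma, and replacing it by a root-by-root comparison of Frobenius eigenvalues would need a different integrality/norm argument (the eigenvalues need not lie in $E$), with different constants than those of Lemma \ref{lem}.

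The missing idea is the one-line bookkeeping you backed away from: lift \emph{every} simple factor $\Wbar_1,\dots,\Wbar_k$ of $\VbarssXr$ via (H) to $W_1,\dots,W_k$ and set $V':=W_1\oplus\cdots\oplus W_k$. Then $\dim V'=\dim V=n$ and $V\congss V'\pmodl$ as $\GK$-representations (hence both as $\Gu$- and $\Gv$-representations); $V'$ is of type (G) because each $W_i$ is the dual of an \'etale cohomology group of a proper smooth variety, and the semistability and Hodge--Tate bound in (H) are exactly what make $V'$ admissible as the second argument of Theorem \ref{thm:main2}. The theorem then gives $\Wv(V)=\Wv(V')$ outright. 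Since $\Wv(V)=\{r,\dots,r\}$, each $\Wv(W_i)$ consists of $\dim W_i$ copies of $r$, so $\Sigma(\Wv(W_i))=r\dim W_i$ would be odd for an odd-dimensional factor, contradicting the evenness you correctly identified ($\Sigma(\Wv(W_i))=2\Sigma(\HTu(W_i))$, or equivalently: odd dimension forces the cohomological degree $s$ to be even). With this direct-sum choice of $V'$ no factor-wise gap principle and no extra constant-matching beyond $\ell>C$ is needed, which is precisely how the paper argues.
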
 

\begin{proof}
Note first that, if  $s$  is odd, then  
$\Hets(\YKbar,\El)$  has even dimension by (GAGA and) 
Hodge theory. Now, let  
$\Wbar_1,...,\Wbar_k$  be the simple factors of  
$\Vbarss$. 
By (H), each  $\Wbar_i$  lifts to a geometric  $W_i$  
with  $\HTu(W_i)\subset[0,\ell-2]$. 
If one of the  $W_i$  has odd dimension, then 
it must have even weight, 
while  $V$  has odd weight  $r$, since  
$X$  has potentially good reduction at  $v$. 
Thus the corollary follows from 
Theorem \ref{thm:main2} by putting  
$V':=W_1\oplus\cdots\oplus W_k$.
\end{proof}

As a special case where the Hypothesis (H) holds, 
we have:

\begin{corollary}
\label{cor:cycl} 
For any prime number  $\ell>C$, 
any {\rm odd} integer  $r$  with  $1\leq r\leq b$,  
any places  $u$  of  $K$  and  $\l$  of  $E$  
both lying above  $\ell$, and any smooth proper variety  
$X$  over  $K$  which has $r$-th Betti number  $\leq n$,
has potentially good reduction at  $v$, and 
has semistable reduction at  $u$, 
the Galois representation on  $\VbarssXr$  
is not the sum of integral powers mod $\ell$ cyclotomic characters.
\end{corollary}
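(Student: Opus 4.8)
The plan is to deduce this from Corollary \ref{cor:H} by checking that Hypothesis (H) is automatically satisfied in the stated situation. So the first step is to record that if $X$ has semistable reduction at $u$, then $V=\VXr$ is semistable at $u$ (as a $\Gu$-representation, by the comparison theorems of $p$-adic Hodge theory, e.g.\ the work of Tsuji and others), and its Hodge--Tate weights at $u$ lie in $[0,r]\subset[0,b]\subset[0,\ell-2]$, the last inclusion because $\ell>C\geq e^2b+1\geq b+2$. The semisimplification $\VbarssXr$ then has simple factors $\Wbar_1,\dots,\Wbar_k$; I must exhibit each $\Wbar_i$ as the reduction of a geometric representation $W_i=\Hets(\YKbar,\El)^*$ that is semistable at all $u\mid\ell$ with Hodge--Tate weights in $[0,\ell-2]$.

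Here is where the hypothesis of Corollary \ref{cor:cycl} is used: we are told that the Galois action on $\VbarssXr$ \emph{would be} a sum of integral powers of mod $\ell$ cyclotomic characters if the conclusion failed — so arguing by contradiction, suppose $\Vbarss\cong\bigoplus_i\chibar^{m_i}$ with $\chibar$ the mod $\ell$ cyclotomic character and $0\le m_i$ (one may assume $0\le m_i\le\ell-2$ after twisting, since $\chibar$ has order dividing $\ell-1$). Each simple factor $\Wbar_i$ is then the one-dimensional representation $\chibar^{m_i}$, which lifts to $\El(m_i)$, the $m_i$-th Tate twist. The second step is to realise $\El(m_i)$ geometrically: it occurs (up to the dual/twist bookkeeping in the statement of (H)) in the cohomology of a product of copies of $\mathbb{P}^1$, namely $H^{2m_i}((\mathbb{P}^1)^{m_i}_{\Kbar},\El)^*\cong\El(m_i)$, which is a smooth proper variety over $K$ with good (hence semistable) reduction everywhere, and whose Hodge--Tate weights are $\{m_i\}\subset[0,\ell-2]$. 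Thus (H) holds for $\VbarssXr$ with $Y=(\mathbb{P}^1)^{m_i}$ and $s=2m_i$.

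With (H) verified, Corollary \ref{cor:H} applies (note $X$ has the $r$-th Betti number $\le n$, potentially good reduction at $v$, and semistable reduction at $u$, as required, and $r$ is odd with $1\le r\le b$): it tells us that no simple factor of $\VbarssXr$ has odd dimension. But in our by-contradiction situation every simple factor $\chibar^{m_i}$ is one-dimensional, hence odd-dimensional — provided $\VbarssXr\ne 0$, which holds since $X$ has potentially good reduction at $v$ and odd weight $r$, so $V\ne 0$ and hence $\Vbar\ne0$. This contradiction shows the Galois representation on $\VbarssXr$ cannot be a sum of integral powers of the mod $\ell$ cyclotomic character. The main obstacle is the bookkeeping in the geometric realisation of the Tate twists: one must match the precise form demanded in (H) (a dual $\Hets(\YKbar,\El)^*$, semistable at all $u\mid\ell$, Hodge--Tate weights in $[0,\ell-2]$) against what powers of $\mathbb{P}^1$ supply, and check that the shift $0\le m_i\le\ell-2$ can be arranged — but since $\ell-2\geq b\geq r$ is large and $\chibar$ has order dividing $\ell-1$, this reduction step is harmless.
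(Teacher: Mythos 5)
Your proposal is correct and follows essentially the paper's intended route: the paper presents Corollary \ref{cor:cycl} precisely as the special case of Corollary \ref{cor:H} in which Hypothesis (H) holds automatically, and your verification of (H) — lifting each $\chibar^{m_i}$ (after normalizing $0\le m_i\le\ell-2$) to the Tate twist $\El(m_i)\simeq H^{2m_i}_{\mathrm{et}}\bigl((\mathbb{P}^1)^{m_i}_{\Kbar},\El\bigr)^*$, which has good reduction everywhere and Hodge--Tate weight $m_i\in[0,\ell-2]$ — is exactly the point, after which the one-dimensionality of the factors contradicts the conclusion of Corollary \ref{cor:H}. (Your side remarks, e.g.\ the inequality $e^2b+1\ge b+2$ and the claim that $V\neq 0$ follows from potentially good reduction, are inessential and slightly off, but they play no role in the argument.)
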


In fact, we can generalize this a bit as follows. 
Let  $\chi$  and  $\chibar$  denote respectively the 
$\ell$-adic and mod $\ell$ cyclotomic characters of  $\GK$. 

\begin{corollary}
\label{cor:diagonal}
Assume  $E$  contains the $e^2$-th roots of unity. Then 
for any prime number  $\ell>C$  
such that  $\ell\equiv 1\ (\mathrm{mod}\ e^2)$, 
any {\rm odd} integer  $r$  with  $1\leq r\leq b$,  
any places  $u$  of  $K$  and  $\l$  of  $E$  
both lying above  $\ell$, and any smooth proper variety  
$X$  over  $K$  which has $r$-th Betti number  $\leq n$, 
has potentially good reduction at  $v$, and  
acquires semistable reduction over a finite extension  
$\Kup/\Ku$  with absolute ramification index  $e(\Kup/\Ql)$  dividing  $e$, 
the Galois representation  $\VbarssXr$  
is not the sum of characters of  $\GK$  of the form  
$\ebari\chibar^\bi$, where  $\ebari:\GK\to\kl^\times$  are 
characters unramified at  $u$  and of finite order dividing 
the order of the group of roots of unity in  $E$, 
and  $\bi$  are integers.
\end{corollary}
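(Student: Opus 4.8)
The plan is to argue by contradiction. Suppose $\VbarssXr$ is isomorphic, as a $\kl$-linear representation of $\GK$, to $\bigoplus_{i=1}^{k}\ebari\chibar^{\bi}$, with the $\ebari\colon\GK\to\kl^\times$ unramified at $u$ and of order dividing $\#\mu(E)$ (the order of the group of roots of unity in $E$), the $\bi\in\Z$, and $k$ the $r$-th Betti number of $X$. First I would check that $V:=\VXr$ lies in $\RepGElnKubevp$ (with $n$ replaced by $k$ if $k<n$, which only weakens $C$): it has Hodge--Tate weights in $[0,r]\subset[0,b]$ at $u$; it becomes semistable over the given $\Kup$ with $e(\Kup/\Ql)\mid e$ by the semistable comparison theorem; it is of type (G), being the cohomology of a proper smooth variety (cf.\ Proposition \ref{prop:(G)}); and, since $X$ has potentially good reduction at $v$, passing to a finite extension of $\Kv$ that realises good reduction and invoking the Weil conjectures shows $V$ to be of type (W) and $E$-integral at $v$, with \emph{every} Weil weight of $V$ at $v$ equal to the odd integer $r$. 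Moreover $\ell>C\geq e^2b+1$ gives $e(\Kup/\Ql)\,r\leq e^2b<\ell-1$, so Proposition \ref{prop:tame inertia} applies over $\Kup$ and yields $\TI_{u'}(\VbarXr)\subset[0,r]$ (viewing $\VbarXr$ via the $\Ql$-structure on $\VXr$; cf.\ Lemma \ref{lem:modell}). The aim is to lift $\VbarssXr$ to an $\El$-linear $V'\in\RepGEln(K;u,(\ell-2)/e^2,v)'$ of dimension $k$, apply Theorem \ref{thm:main2} to obtain $\Wv(\VXr)=\Wv(V')$, and contradict this by a parity argument.

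\emph{Lifting the characters.} Fix $i$. Since $\ebari$ is unramified at $u$, the tame-inertia weight of $\ebari\chibar^{\bi}$ at $u'$ coincides with that of $\chibar^{\bi}$ and hence lies in $[0,r]$. I would then choose a non-negative integer $a_i\leq r$ for which $\chibar^{a_i}$ and $\chibar^{\bi}$ have the same restriction to $I_{u'}$; the ``error'' character $\chibar^{\bi-a_i}$ is then trivial on $I_{u'}$ and, because $\ell\equiv1\pmod{e^2}$, of order dividing $e^2$, so that $\ebari\chibar^{\bi-a_i}$ has order dividing $\mathrm{lcm}(\#\mu(E),e^2)=\#\mu(E)$ — here $\mu_{e^2}\subset E$ is used. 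As $\ell>C$ forces $\ell\nmid\#\mu(E)$, the Teichm\"uller lift $\ee_i\colon\GK\to\mu(E)\subset E^\times$ of $\ebari\chibar^{\bi-a_i}$ exists, has order dividing $\#\mu(E)$, and is unramified over $\Kup$. Put $W_i:=\ee_i\chi^{a_i}$ and $V':=W_1\oplus\cdots\oplus W_k$; then $V'\bmod\l$ is semisimple and isomorphic to $\VbarssXr$, so $\VXr\congss V'\pmod\l$ as $\GK$-representations, hence also as $\Gu$- and $\Gv$-representations.

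\emph{Conclusion.} Each $W_i$ is crystalline over $\Kup$ (the product of $\chi^{a_i}$, crystalline, with $\ee_i$, unramified over $\Kup$), and $e(\Kup/\Ql)\mid e$; it is Hodge--Tate at $u$ with single weight $a_i\in[0,r]\subset[0,(\ell-2)/e^2]$; it is $E$-integral at $v$, because over a finite extension $\Kvp$ killing the ramification of $\ee_i$ the Frobenius $\Frobvp$ acts on $W_i$ by $\ee_i(\Frobvp)\,\qv^{a_i}\in\mu(E)\cdot\Z\subset\OE$, and the multiset of these eigenvalues is stable under $\Gal(\Qbar/E)$ since the $\ee_i(\Frobvp)$ are $\#\mu(E)$-th roots of unity and $\mu_{\#\mu(E)}\subset E$; and it is of type (G) since $\Sigma(\Wv(W_i))=2a_i=2\Sigma(\HTu(W_i))$. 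Thus $V'\in\RepGEln(K;u,(\ell-2)/e^2,v)'$, and Theorem \ref{thm:main2} gives $\Wv(\VXr)=\Wv(V')$. But every member of $\Wv(\VXr)$ is the odd integer $r$, whereas every member of $\Wv(V')$ is some $2a_i$, which is even; this contradiction proves the corollary.

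\emph{Main obstacle.} The delicate part is the construction of the lifts $W_i$: one must simultaneously arrange that $W_i$ reduces to $\ebari\chibar^{\bi}$, that its Hodge--Tate weight is $\leq(\ell-2)/e^2$, that it is $E$-valued, and that it becomes semistable over $\Kup$. The two extra hypotheses $\ell\equiv1\pmod{e^2}$ and $\mu_{e^2}\subset E$ are exactly what allow the single choice of $a_i$ above to secure all of these at once; in particular, controlling the order of the error character $\chibar^{\bi-a_i}$ — both globally (for $E$-integrality at $v$) and on $I_{u'}$ (for semistability over $\Kup$) — against the Hodge--Tate bound of Theorem \ref{thm:main2} is the point requiring the most care.
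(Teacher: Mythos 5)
Your proposal is correct and follows essentially the same route as the paper: you bound the tame inertia weights of $V$ at $u'$ via Proposition \ref{prop:tame inertia}(i), use this together with $\ell\equiv 1\pmod{e^2}$ and $\mu_{e^2}\subset E$ to lift each $\ebari\chibar^{\bi}$ to an $E$-rational character $\ee_i\chi^{a_i}$ with $0\le a_i\le r$, and then apply Theorem \ref{thm:main2} to $V'=\bigoplus_i\ee_i\chi^{a_i}$ to reach the same odd-versus-even contradiction between $\Wv(V)=\{r,\dots,r\}$ and $\Wv(V')=\{2a_1,\dots\}$. The only (cosmetic) difference is that you take the Teichm\"uller lift of the ``error'' character $\ebari\chibar^{\bi-a_i}$, which is trivial on $I_{u'}$, where the paper writes $\chibar^{\bi}=\kappabar^{j}\chibar^{\boi}$ with $\kappabar=\chibar^{(\ell-1)/e^2}$ and lifts $\kappabar$ and $\ebari$ separately.
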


\begin{proof}
Suppose  $X$  has semistable reduction over  $\Kup$  
with  $e(\Kup/\Ql)\mid e$. We may assume  
$e(\Kup/\mathbb{Q}_{\ell})=e$.
Suppose  $\Vbarss$  is the sum of the characters    
$\ebari\chibar^\bi$  as above. 
Then the action of the tame inertia group  
$\Iuptame$  at  $u'$  on the $i$-th factor 
is via  $\chibar^\bi$, which equals  
$\theta^{e\bi}$, where  $\theta$  is the 
fundamental character of  $\Iuptame$  of level 1 
(\cite{Serre:PG}, Sect.\ 1.8, Prop.\ 8). 
By (i) of Proposition \ref{prop:tame inertia}, we have  
$e\bi\equiv\ci\pmod{\ell-1}$  with  
$0\leq\ci\leq eb$. Since  
$e^2\mid\ell-1$, 
we have  
$\bi=\boi+\frac{\ell-1}{e^2}j$
with   
$0\leq\boi\leq b$  and  $0\leq j<e^2$. 
Set  
$\kappabar:=\chibar^{(\ell-1)/e^2}$
and let  
$\kappa:\GK\to\El^\times$  be its Teichm\"uller lift. 
Since the $e^2$-th power of $\kappa$ is trivial,
it takes values in  $E^\times$.  
Similarly, the Teichm\"uller lift  $\ei$  of  $\eibar$  
has also values in  $E^\times$. 
Now each character  
$\eibar\chibar^\bi
=\eibar\kappabar^j\chibar^\boi$  lifts to the character  
$\ei   \kappa   ^j\chi   ^\boi:\GK\to\El^\times$, 
or to the 1-dimensional $\El$-linear $E$-integral geometric representation  
$\El(\ei\kappa^j)\otimes_\Ql\Ql(\boi)$, where  
$\El(\ei\kappa^j)$  is the twist of the trivial representation  $\El$  
by the finite character  $\ei\kappa^j$  and  
$\Ql(\boi)$  denotes the $\boi$-th Tate twist. 
Let  $V'$  be the direct-sum of these representations. 
By Theorem \ref{thm:main2}, we have  
$\Wv(V)=\Wv(V')$, but 
$\Wv(V)=\{r,...,r\}$  (since  $X$  has potentially good reduction at  $v$) while 
$\Wv(V')=\{2b_{01},...,2b_{0n}\}$, 
which is a contradiction if  $r$  is odd. 
\end{proof}

Specializing further, we have:

\begin{corollary}
Let  $K=\Q$. 
Assume  $E$  contains the $e^2$-th roots of unity. Then 
for any prime number  $\ell>C$  
such that  $\ell\equiv 1\ (\mathrm{mod}\ e^2)$, 
for any {\rm odd} integer  $r$  with  $1\leq r\leq b$,  and 
for any smooth proper variety  $X$  over  $\Q$   
which has  $r$-th Betti number  $\leq n$,  
has good reduction outside  $\ell$  and 
acquires semistable reduction over a finite extension  
$\Kup/\Ql$  with absolute ramification index  $e(\Kup/\Ql)$  dividing  $e$,
the Galois representation on  $\Vbar$
is not Borel. 
\end{corollary}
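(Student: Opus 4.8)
The plan is to deduce the statement directly from Corollary~\ref{cor:diagonal}, applied with $K=\Q$. The hypotheses imposed there on the coefficient field (that $E$ contains the $e^2$-th roots of unity) and on $\ell$ (namely $\ell>C$ and $\ell\equiv 1\pmod{e^2}$) are exactly those assumed here; moreover ``good reduction outside $\ell$'' forces good --- hence potentially good --- reduction at the fixed place $v$ (recall $v\nmid\ell$ throughout), and the semistability condition over $\Kup/\Ql$ with $e(\Kup/\Ql)\mid e$ is verbatim the one appearing in that corollary. So the only point that still needs an argument is that, over $K=\Q$, a representation on $\Vbar$ which is Borel has semisimplification $\Vbarss$ equal to a sum of characters of the shape $\ebari\chibar^{\bi}$ permitted in Corollary~\ref{cor:diagonal}; we will see this with every $\ebari$ trivial.

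First I would unwind the word ``Borel'': the image of $\GQ$ in $\GL(\Vbar)$ lies in a Borel subgroup exactly when $\Vbar$ admits a complete flag of $\GQ$-stable $\kl$-subspaces, whose successive quotients are one-dimensional. Hence $\Vbarss$ is a direct sum of characters $\f_i\colon\GQ\to\kl^\times$. Since $X$ has good reduction at every finite place $v'\nmid\ell$, the representation $\Vbar$ --- and therefore each $\f_i$ --- is unramified outside $\ell$.

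The key input is then the classical description of such characters. A continuous character $\f\colon\GQ\to\kl^\times$ has finite image of order prime to $\ell$ (because $\#\kl^\times$ is); being unramified outside $\ell$, it cuts out a finite abelian extension of $\Q$ whose conductor is a power of $\ell$, so by Kronecker--Weber it factors through $\Gal(\Q(\zeta_{\ell^\infty})/\Q)\cong\Zl^\times$, and since $\Zl^\times\cong\Z/(\ell-1)\Z\times\Zl$ and $\f$ has order prime to $\ell$, it factors through the finite quotient $\Gal(\Q(\zeta_\ell)/\Q)=\langle\chibar\rangle$, i.e. $\f=\chibar^{\,b}$ for some integer $b$. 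Consequently $\Vbarss\cong\bigoplus_i\chibar^{b_i}$, which is a sum of characters $\ebari\chibar^{\bi}$ with each $\ebari=1$ --- trivially unramified at $u$ and of order dividing the number of roots of unity in $E$. This is precisely what Corollary~\ref{cor:diagonal} forbids (using that $r$ is odd), so the assumption that $\Vbar$ is Borel is untenable. I do not foresee any genuine obstacle here: the real content lies in Corollary~\ref{cor:diagonal}, and the present argument merely records this reduction together with the Kronecker--Weber description of characters of $\GQ$ unramified outside $\ell$.
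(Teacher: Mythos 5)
Your proposal is correct and follows essentially the same route as the paper's own proof: Borel implies the semisimplification is a sum of characters, good reduction outside $\ell$ makes them unramified outside $\ell$, and over $\Q$ such characters are powers of $\chibar$, so Corollary \ref{cor:diagonal} applies with all $\ebari$ trivial. Your only addition is to spell out the Kronecker--Weber step that the paper leaves implicit.
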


Here, we say that the representation  $\Vbar$  is Borel 
if the action of  $\GQ$  is given by upper-triangular matrices 
with respect to a suitable $\kl$-basis of  $\Vbar$. 

\begin{proof}
Indeed, if it is Borel, its semisimplification is a sum of 
characters, which are unramified outside  $\ell$  by assumption. 
Since the base field is  $\Q$, 
they are powers of the mod $\ell$ cyclotomic character. 
Now the the result follows from the previous corollary.
\end{proof}

\section{Congruences of modular forms}

We use the same notations as in the Introduction,
except that we always suppose  $K=\Q$  and write  
$q$  for  $q_v$  in this section. 
We put  $\varphi(N)=\#(\Z/N\Z)^{\times}$
for any positive integer  $N$
and denote by  $\Zbar$  the integer ring of  $\Qbar$.
The goal of this section is to give a proof of the following congruence 
result on the Fourier coefficients of modular forms. 
For any integers  $k,N\geq 1$  and a character 
$\epsilon:(\Z/N\Z)^\times\to\C^\times$, 
let  $\SkNe$  denote the $\C$-vector space of 
cusp forms of weight  $k$, level  $N$  and Nebentypus character  $\epsilon$. 
For a normalized Hecke eigenform  
$f(z)=\sum_{n=1}^\infty a_n(f)e^{2\pi inz}\in\SkNe$, 
integers  $i,j$  and a prime number  $\ell$,  
consider the following condition on the Fourier coefficients  
$a_p(f)$  of  $f$: 
\begin{equation}\label{eqn:Cijl}
\tag{$\mathrm{C}_{i,j:\ell}$}
   a_p(f)\equiv p^i+p^j\ (\text{mod}\ {\ell\Zbar})\quad
   \text{for all but finitely many primes  $p\nmid \ell N$}.
\end{equation}
For fixed  $k$  and  $N$, 
it is well known (cf.\ e.g.\ 
Thm.\ 10 of \cite{Serre:Cong} and 
the Introduction of \cite{Kiming-Verrill})
that there are only finitely many exceptional primes, 
and a fortiori finitely many primes  $\ell$  for which  
{\rm (\ref{eqn:Cijl})} hold for some  $i,j$  and  $f\in\SkNe$.  
Until recently, however, the situation had not been very clear when 
we let  $k$  and  $N$  vary; as for recent works, see 
\cite{Ghate-Parent} for the case of modular Abelian varieties and 
\cite {Billerey-Dieulefait} for the case of modular forms on  $\Gamma_0(N)$. 
In this vein, we show the following 
by using Theorem \ref{thm:main3}:

\begin{theorem}
\label{thm:modular} 
Fix a prime number  $q$. 
For any integer  $k\ge 1$, 
any prime  $\ell>4q^{2(k-1)}$,
any integer  $N$  such that  $q\nmid N$,
$\ell\nmid \varphi(N)$  and  $\ell^2\nmid N$, 
any character  $\epsilon:(\Z/N\Z)^\times\to\C^\times$, and 
any normalized Hecke eigenform  $f\in\SkNe$, we have the following:

\sn
(i)  
The condition {\rm (\ref{eqn:Cijl})} can hold only if   
$i\equiv j\equiv (k-1)/2\ \pmod{\ell-1}$.

\sn
(ii) 
The condition {\rm (\ref{eqn:Cijl})} holds for no  $i$  and  $j$  
if either 
$k=1$, 
$k$  is even, or 
$\ell\nmid N$. 
\end{theorem}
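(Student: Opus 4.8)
The plan is to attach to a normalized Hecke eigenform $f\in\SkNe$ its $\ell$-adic Galois representation and then feed it into Theorem \ref{thm:main3}. Let me sketch the steps.

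\paragraph{Setup via Deligne's representation.}
First I would let $E$ be the Hecke field of $f$ (the number field generated by the Fourier coefficients $a_n(f)$), fix a place $\l$ of $E$ above $\ell$, and take $\rho_{f,\l}:\GQ\to\GL_2(\El)$ to be the ($\El$-linear dual of the) two-dimensional Galois representation attached to $f$ by Deligne. Set $V=V_{f,\l}$ for its representation space, so $n=2$. The key properties I will use are: $\rho_{f,\l}$ is unramified outside $\ell N$, with $\det(T-\Frob_p\,|\,V)=T^2-a_p(f)T+\epsilon(p)p^{k-1}$ for $p\nmid \ell N$; at $v=q$ (where $q\nmid N$ by hypothesis) it is crystalline/unramified-up-to-twist, hence of type (W), and in fact, since $q\nmid N$, $V$ is unramified at $q$ with Weil weights $\{(k-1)/2,(k-1)/2\}$ (the $a_q(f)$ is a $q$-Weil integer of weight $k-1$, so each eigenvalue has weight $(k-1)/2$); at $u=\ell$ it is de Rham with Hodge-Tate weights $\{0,k-1\}$, so $\HTu(V)\subset[0,b]$ with $b:=k-1$, and it becomes semistable over an extension $\Kup/\Q_\ell$ whose absolute ramification index divides $e$, where I take $e$ as small as possible given the condition $\ell^2\nmid N$ — concretely $e=1$ if $\ell\nmid N$ and $e$ controlled by the $\ell$-part of $N$ otherwise (so $e\mid \ell$, but $\ell^2\nmid N$ forces the relevant extension to be tamely ramified of degree dividing $\ell-1$; one must be slightly careful here and I expect this to be the delicate bookkeeping point). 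Finally $V$ is of type (G) by Lemma \ref{lem:Q} since $K=\Q$, and it is $E$-integral at $q$ by construction. Thus $V\in\RepGEln(\Q;\ell,(k-1),e,q)$ for a suitable $e$.

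\paragraph{Constructing the comparison representation.}
Assume (\ref{eqn:Cijl}) holds for some $i,j$. I would then take $V':=\El(\chi^i)\oplus\El(\chi^j)$ (with a finite-order twist if needed, but over $\Q$ the cyclotomic powers suffice), where $\chi$ is the $\ell$-adic cyclotomic character; this is a two-dimensional $E$-integral geometric representation, of type (G), unramified outside $\ell$, with $\HTu(V')=\{-i,-j\}$ — so after normalizing signs, one should arrange $i,j$ to lie in $[0,\ell-2]$ and land in $\RepGEln(\Q;\ell,(\ell-2),e,q)$ (the bound $(\ell-2)/e^2$ in the statement of Theorem \ref{thm:main3} is what forces us to replace $i,j$ by their reductions mod $\ell-1$, which is harmless for the mod-$\ell$ congruence). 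The congruence (\ref{eqn:Cijl}) says precisely that $\det(T-\Frob_p\,|\,V)\equiv\det(T-\Frob_p\,|\,V')\pmod{\ell\OE}$ for almost all $p\nmid\ell N$; since such Frobenii are dense, this gives $\det(T-\Frob_q\,|\,V)\equiv\det(T-\Frob_q\,|\,V')\pmod{\ell\OE}$, and at $u=\ell$ the $\Gu$-congruence $V\congss V'\pmod\ell$ follows from comparing tame inertia weights — both sides being tamely ramified with the same tame inertia weights $\{i,j\}$ (using Proposition \ref{prop:tame inertia}(i) applied to $V$, valid because $eb<\ell-1$ once $\ell>4q^{2(k-1)}$ comfortably dominates $e(k-1)$). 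Here one invokes the known semisimplicity/structure of $\rho_{f,\l}\vert_{G_\ell}$ mod $\ell$ for $\ell$ large, or more safely just matches the action of tame inertia directly.

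\paragraph{Applying Theorem \ref{thm:main3} and concluding.}
With $\qv=q$, $n=2$, $b=k-1$, the constant is $\tilde C=\max\{e^2(k-1)+1,\ 2\binom{2}{1}q^{2(k-1)}\}=\max\{e^2(k-1)+1,\ 4q^{2(k-1)}\}$, and the hypothesis $\ell>4q^{2(k-1)}$ (together with the mild check that $\ell>e^2(k-1)+1$, again automatic) puts us above $\tilde C$. Theorem \ref{thm:main3} then yields $V\simeqss V'$ as $\Gv$-representations, hence $\Wv(V)=\Wv(V')$. But $\Wv(V)=\{(k-1)/2,(k-1)/2\}$ while $\Wv(V')=\{0,0\}$ from the Tate twists, unless we had shifted: after the normalization above, $\Wv(V')=\{2i',2j'\}$ where $i'\equiv i$, $j'\equiv j\pmod{\ell-1}$ are the chosen representatives, so equality of multisets forces $2i'=2j'=(k-1)$ in $\Z$, i.e. $i\equiv j\equiv (k-1)/2\pmod{\ell-1}$. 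This proves (i). For (ii): if $k=1$ then $(k-1)/2=0$ is achievable only with $i\equiv j\equiv 0$, but then $\Vbar\cong\bar\chi^0\oplus\bar\chi^0$ is unramified everywhere mod $\ell$, contradicting irreducibility of $\rho_{f,\l}$ mod $\ell$ for large $\ell$ (or, more self-containedly, $a_p(f)\equiv 2\pmod{\ell\Zbar}$ for almost all $p$ combined with $|a_p(f)|\le 2$ forces $a_p(f)=2$, impossible for a weight-1 eigenform); if $k$ is even then $(k-1)/2\notin\Z$ so the congruence $i\equiv j\equiv(k-1)/2\pmod{\ell-1}$ is impossible (as $\ell-1$ is even when $\ell$ is an odd prime $>2$, an integer cannot be congruent to a half-integer); and if $\ell\nmid N$ then $f$ has level prime to $\ell$, so $V$ is already crystalline at $\ell$ with $e=1$ and one can rerun the argument — but more to the point the Fontaine--Laffaille/Caruso bound with $e=1$, $b=k-1<\ell-1$ makes all the steps cleanest, and the obstruction in (i) already covers $k$ even; for $k$ odd with $\ell\nmid N$ I would observe that then $\Vbar$ would have to be reducible with both Jordan--Hölder factors $\bar\chi^{(k-1)/2}$ up to unramified twist, i.e. $\Vbar^{\ss}\cong\bar\chi^{(k-1)/2}\otimes(\text{unramified of dim }2)$; twisting by $\bar\chi^{-(k-1)/2}$ reduces to an unramified-outside-$\ell$ semisimple mod-$\ell$ representation of $\GQ$, which is a sum of powers of $\bar\chi$ unramified outside $\ell$ hence of the trivial character, forcing $a_p(f)\equiv 2p^{(k-1)/2}$ and then (by the Weil bound again, or by known results on the image of $\rho_{f,\l}$ for $\ell$ large relative to the level and weight) a contradiction. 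The main obstacle I anticipate is the ramification bookkeeping at $\ell$ under the sole hypothesis $\ell^2\nmid N$: one must verify that $\rho_{f,\l}$ becomes semistable over a tamely ramified extension of $\Q_\ell$ of degree dividing $\ell-1$, so that Proposition \ref{prop:tame inertia} applies and the constant $\tilde C$ really is dominated by $4q^{2(k-1)}$; everything else is a direct translation of (\ref{eqn:Cijl}) into the hypotheses of Theorem \ref{thm:main3}.
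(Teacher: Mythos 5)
You follow the paper's overall strategy (Deligne's representation $\rfl$ with $E$ the Hecke field, comparison with $V'=\chi^{i'}\oplus\chi^{j'}$, Theorem \ref{thm:main3} at $v=q$, then Weil weights), but the two hypotheses of Theorem \ref{thm:main3} are exactly where your sketch has gaps. First, the congruence $\det(T-\Frob_q|\Vfl)\equiv\det(T-\Frob_q|V')\pmod{\ell\OE}$ is \emph{not} ``precisely'' what (\ref{eqn:Cijl}) says: (\ref{eqn:Cijl}) controls only the trace $a_p(f)$, whereas the determinant is $\epsilon(p)p^{k-1}$, so one must prove $\epsilon(q)q^{k-1}\equiv q^{i+j}\pmod{\ell\OE}$. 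In the paper this is Lemma \ref{lem:modular}: one first deduces the \emph{global} congruence (\ref{eqn:cong}), $\rfl\congss\chi^i\oplus\chi^j\pmod{\lambda}$, by Chebotarev and a Brauer--Nesbitt argument (valid since $\ell>2=\dim$), compares determinants to get $\ebarl(x)=x^{i+j-(k-1)}$ modulo each $\lambda\mid\ell$, and then uses $\ell\nmid\varphi(N)$ to upgrade this to a congruence mod $\ell\OE$ (a congruence mod every $\lambda\mid\ell$ is strictly weaker than mod $\ell\OE$ when $\ell$ ramifies in $E$). You never use the hypothesis $\ell\nmid\varphi(N)$, which signals that this step is missing. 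Second, the hypothesis $V\congss V'\pmodl$ as $\Gu$-representations cannot be obtained by ``matching tame inertia weights'': Proposition \ref{prop:tame inertia}(i) only \emph{bounds} $\TIu(\Vfl)$, nothing in your argument identifies the tame inertia weights of $\rflbar|_{I_\ell}$ with $\{i,j\}$ (that identification is a consequence of the congruence, not an input), and in any case agreement of the inertia action would not determine the $G_\ell$-semisimplification, since the unramified (Frobenius) part also matters. The intended source of both hypotheses is again the global congruence (\ref{eqn:cong}), restricted to $G_\ell$; your proposal never establishes it.

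In part (ii) your arguments also diverge from the paper and are not complete. For $k=1$ your size argument ($|a_p(f)|\le 2$ in every embedding, so $a_p(f)=2$ once $\ell>4$, contradicting irreducibility of the Artin representation) can be made to work and differs from the paper, which instead uses Deligne--Serre finiteness of the image together with the pro-$\ell$ kernel of $\GL_2(\OEl)\to\GL_2(\kl)$ to exclude a unipotent $\rflbar$; but ``irreducibility of $\rflbar$ for $\ell$ large'' is not uniform in $N$ and cannot be invoked here. For $\ell\nmid N$ and odd $k\ge 3$ your proposed contradiction fails: $a_p(f)\equiv 2p^{(k-1)/2}\pmod{\ell\Zbar}$ for a \emph{fixed} $\ell$ and varying $p$ is not contradicted by the Ramanujan bound (the difference grows archimedeanly while staying divisible by $\ell$), and appeals to large-image results are again non-uniform. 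The paper's argument is different and uses crystallinity: by Fontaine--Laffaille the tame inertia weights of $\rflbar$ at $\ell$ equal the Hodge--Tate weights $\{0,k-1\}$, while (\ref{eqn:cong}) and part (i) force them to be $\{(k-1)/2,(k-1)/2\}$ modulo $\ell-1$; since $\ell>k$ this forces $k=1$, already excluded. Finally, the ramification ``bookkeeping'' you flag as delicate is handled simply: $\ell^2\nmid N$ gives that $\rfl$ is semistable at $\ell$, so one takes $e=1$ and $\tilde C=\max\{k,\,4q^{2(k-1)}\}$; no tamely ramified auxiliary extension of degree dividing $\ell-1$ is needed, but the semistability statement does have to be supplied rather than left open.
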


We begin by proving a lemma. 
For any  $f$  as in the theorem, 
we denote by  $E=\Q_f$  the field obtained by adjoining 
all Fourier coefficients of $f$ to $\Q$, 
which is a finite extension of $\Q$.
We regard  $\epsilon$  as a character with values in  $\OE^\times$.
Denote by 
$\ebar$ (resp.\ $\ebarl$) the composite 
$(\Z/N\Z)^\times\overset{\epsilon}{\to} \OE^\times
\overset{\mathrm{mod}\ \ell}{\to} (\OE/\ell \OE)^\times$
(resp.\ 
$(\Z/N\Z)^\times\overset{\epsilon}{\to} \OE^\times
\overset{\mathrm{mod}\ \lambda}{\to} (\OE/\lambda\OE)^\times$). Let  
$$
   \rfl:\ \GQ\ \to\ \GL_\El(\Vfl)
$$
be the 2-dimensional $\El$-linear representation of $\GQ$
associated with  $f$.
Thus if  $p\nmid\ell N$, then  $\Vfl$  is unramified at  $p$  and one has 
$$
   \det(T-\Frob_p|\Vfl)\ =\ T^2-a_p(f)T+\epsilon(p)p^{k-1}.
$$
In particular, 
it is $E$-integral at  $p$  in the sense of Definition \ref{def:E-int}.     
One has  $\W_p(\Vfl)=\{(k-1)/2,(k-1)/2\}$.  
It is  crystalline (resp.\ semistable) at  $\ell$  
if  $\ell\nmid N$ (resp.\ $\ell^2\nmid N$).

\begin{lemma}
\label{lem:modular}
Suppose  $\ell>2$.
Let  $k\ge 1$  and  $N\ge 1$  be integers with  $\ell\nmid\varphi(N)$.
Let  $\epsilon\colon(\Z/N\Z)^\times\to\C^\times$
be a character.
Suppose that a normalized Hecke eigenform 
$f\in\SkNe$  satisfies the condition {\rm (\ref{eqn:Cijl})} for some  $i,j$.
Then  $\ebar$  has values in fact in 
the canonical image of  $\Fl^\times$  in  $(\OE/\ell \OE)^\times$. 
Moreover, the following holds:

\sn
(i) 
We have  $\ebar(x\ (\mathrm{mod}\ N))=x^{i+j-(k-1)}\ (\mathrm{mod}\ \ell)$
for any  $x$  prime to  $N$.

\sn
(ii) 
If  $\ell\nmid N$, 
then we have  $i+j\equiv k-1 \pmod{\ell-1}$  and  $\bar{\epsilon}=1$.
\end{lemma}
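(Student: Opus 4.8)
The plan is to analyze the mod $\ell$ reduction $\rflbar$ of the Galois representation attached to $f$ and extract the character-theoretic consequences of the congruence (\ref{eqn:Cijl}). First I would reduce (\ref{eqn:Cijl}) to a statement about $\rflbar$: for all but finitely many primes $p\nmid \ell N$, the identity $\det(T-\Frob_p\mid\Vfl)=T^2-a_p(f)T+\epsilon(p)p^{k-1}$ together with the hypothesis $a_p(f)\equiv p^i+p^j\pmod{\ell\Zbar}$ gives $\Tr(\rflbar(\Frob_p))=p^i+p^j$ and $\det(\rflbar(\Frob_p))=\ebarl(p)p^{k-1}=p^{i+j}$ in $\kl$; comparing, $\ebarl(p)\equiv p^{i+j-(k-1)}\pmod{\lambda}$. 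By Chebotarev (the $\Frob_p$ being dense) this forces the mod $\lambda$ cyclotomic identity $\ebarl=\chibar^{i+j-(k-1)}$ as characters of $\GQ$ valued in $\kl^\times$; since $\chibar$ is $\Fl^\times$-valued, $\ebarl$ is $\Fl^\times$-valued. The hypothesis $\ell\nmid\varphi(N)$ is what lets me lift this back to $\ebar$ (reduction mod $\ell$): the prime-to-$\ell$ group $(\Z/N\Z)^\times$ injects into $(\OE/\ell\OE)^\times$, so a character of it with values in the $\ell$-power-free part is detected after reduction mod $\lambda$, giving $\ebar(x\bmod N)=x^{i+j-(k-1)}\bmod\ell$ for $x$ prime to $N$, and in particular $\ebar$ is $\Fl^\times$-valued. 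This is part (i).

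For part (ii), assume $\ell\nmid N$. Then $\Vfl$ is crystalline at $\ell$ with Hodge-Tate weights $\{0,k-1\}$, hence $\det\Vfl$ is crystalline at $\ell$ with Hodge-Tate weight $k-1$, and so $\det\rflbar=\ebarl\chibar^{k-1}$ has a prescribed restriction to inertia at $\ell$: on the tame inertia $I_\ell^\tame$ it is $\theta^{k-1}$ where $\theta$ is the level-one fundamental character (using Proposition \ref{prop:tame inertia}(i) and $\ell>k-1$ coming from $\ell>4q^{2(k-1)}$, or more simply the well-known computation of the determinant of a modular mod-$\ell$ representation). On the other hand, from $\ebarl=\chibar^{i+j-(k-1)}$ we get $\det\rflbar=\chibar^{i+j}$, so $\chibar^{i+j}$ restricted to $I_\ell^\tame$ is $\theta^{i+j}$. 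Comparing, $\theta^{i+j}=\theta^{k-1}$ on $I_\ell^\tame$, and since $\theta$ has order $\ell-1$ this gives $i+j\equiv k-1\pmod{\ell-1}$. Feeding this back into part (i), $\ebar(x\bmod N)=x^{0}=1$, i.e.\ $\ebar=1$.

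The main obstacle I anticipate is the bookkeeping around $\ebar$ versus $\ebarl$ and making the ``lift from mod $\lambda$ to mod $\ell$'' step watertight: one needs that the character $\ebar$ of the finite abelian group $(\Z/N\Z)^\times$ of order prime to $\ell$ is already determined by, and takes values in the subgroup controlled by, its composition with $(\OE/\ell\OE)^\times\to(\OE/\lambda\OE)^\times$. Concretely this uses that reduction mod $\lambda$ on the group $\mu$ of roots of unity in $\OE$ of order prime to $\ell$ is injective (Hensel/Teichm\"uller), so a $\mu$-valued character is detected mod $\lambda$, and that $\Fl^\times\hookrightarrow\kl^\times$ lets one descend the cyclotomic identity. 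Once this is set up, the Chebotarev density argument and the determinant/Hodge-Tate computation are routine, and the numerical hypothesis $\ell>4q^{2(k-1)}$ is not even needed for this lemma (it will be used later, via Theorem \ref{thm:main3}, to prove the full Theorem \ref{thm:modular}); only $\ell>2$ and $\ell\nmid\varphi(N)$ are required here.
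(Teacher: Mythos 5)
You have the right overall strategy (it is the paper's), but two steps, as written, do not work.

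First, the determinant step. You assert, prime by prime, that the trace congruence $a_p(f)\equiv p^i+p^j$ together with $\det(T-\Frob_p|\Vfl)=T^2-a_p(f)T+\epsilon(p)p^{k-1}$ ``gives'' $\det\rflbar(\Frob_p)=\ebarl(p)p^{k-1}=p^{i+j}$. At a single prime the trace congruence says nothing about the constant term of the characteristic polynomial, so the middle equality is exactly what has to be proved. The correct order is the paper's: from the trace congruence at almost all $p$, Chebotarev plus a Brauer--Nesbitt--type argument yield the semisimplification congruence $\rfl\congss\chi^i\oplus\chi^j\pmod{\lambda}$ (this is (\ref{eqn:cong}), quoted from Lemma 2.10 of \cite{Ozeki2}, and it is precisely where the hypothesis $\ell>2=\dim\rfl$ is used); only then does $\det\rflbar=\chibar^{i+j}$, hence $\ebarl=\chibar^{\,i+j-(k-1)}$, follow. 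You invoke Chebotarev only afterwards, to globalize a pointwise identity that has not been established pointwise, and you never locate the role of $\ell>2$ (equivalently one may globalize the trace identity first and use $\Tr\rflbar(g^2)=\Tr\rflbar(g)^2-2\det\rflbar(g)$, which again needs $\ell\neq 2$).

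Second, the lift from mod $\lambda$ to mod $\ell$. Your mechanism --- $\epsilon$ has values of order prime to $\ell$ because $\ell\nmid\varphi(N)$, and reduction mod $\lambda$ is injective on prime-to-$\ell$ roots of unity --- shows that $\epsilon$, hence $\ebar$, is \emph{determined} by $\ebarl$, but it does not give the asserted formula $\ebar(x)\equiv x^{i+j-(k-1)}\pmod{\ell\OE}$: a prime-to-$\ell$ root of unity can be congruent to a rational integer modulo one prime $\lambda\mid\ell$ without being congruent to it modulo the others (take $E=\Q(i)$, $\ell$ split, $\epsilon(x)=i$). What makes the statement true is that (\ref{eqn:Cijl}) is a congruence modulo $\ell\Zbar$, so the whole argument applies to $\rho_{f,\lambda'}$ for \emph{every} $\lambda'\mid\ell$, giving $\bar{\epsilon}_{\lambda'}=\chibar^{\,i+j-(k-1)}$ for all $\lambda'\mid\ell$ (the paper's (\ref{eqn:epsilon})); these are then glued using that the kernel of $(\OE/\ell\OE)^\times\to\prod_{\lambda'\mid\ell}(\OE/\lambda'\OE)^\times$ is an $\ell$-group while $(\Z/N\Z)^\times$ has order prime to $\ell$. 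You work with one fixed $\lambda$ throughout, so this step is missing. Your part (ii), by contrast, is essentially sound and differs from the paper: you compare ramification at $\ell$ of the two sides of $\ebarl=\chibar^{\,i+j-(k-1)}$ (via $\det\rfl=\epsilon\chi^{k-1}$), whereas the paper argues directly from its explicit formula; and you are right that the bound $\ell>4q^{2(k-1)}$ is not needed here --- but your parenthetical appeal to ``$\ell>k-1$'' imports a hypothesis the lemma does not contain, so you should use the unramifiedness of $\ebarl$ at $\ell$ versus the ramification of $\chibar^m$ (nontrivial on inertia unless $(\ell-1)\mid m$) rather than crystallinity.
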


\begin{proof}
By assumption, we have 
$\Tr(\Frob_p|\Vfl)\equiv p^i+p^j\ (\mathrm{mod}\ \ell\OE)$
for all but finitely many  $p\nmid \ell N$.
In particular,
we have  
\begin{equation}\label{eqn:cong}
   \rfl\ \congss\ \chi^i\oplus\chi^j \pmod{\lambda}
\end{equation}
as $\kl$-linear representations of  $\GQ$
(This holds because  $\ell>\dim\rfl$; 
see e.g.\ Lemma 2.10 of \cite{Ozeki2}),
and then we have also 
$\epsilon(p)p^{k-1}\equiv p^{i+j}\ (\mathrm{mod}\ \lambda)$.
Hence we see that 
\begin{equation}\label{eqn:epsilon}
   \ebarl(x\ (\mathrm{mod}\ N))\ =\ x^{i+j-(k-1)}\ \pmod\lambda
\end{equation}
for any  $\lambda|\ell$  and any integer  $x$  prime to  $N$.

(i) 
Since the kernel of the projection 
$(\OE/\ell\OE)^\times\to 
\prod_{\lambda\mid\ell} (\OE/\lambda\OE)^\times$
has $\ell$-power order,  
if  $\ell\nmid\varphi(N)$, then the homomorphism  
$\prod_{\lambda\mid \ell}\bar{\epsilon}_{\lambda}:
(\Z/N\Z)^\times\to\prod_{\lambda\mid\ell} (\OE/\lambda\OE)^\times$  
lifts uniquely to a homomorphism  
$(\Z/N\Z)^\times\to(\OE/\ell\OE)^\times$, which is   
$\bar{\epsilon}$. 
According to (\ref{eqn:epsilon}), it is given by 
\begin{equation}\label{eqn:epsilon2}
   \ebar(x\ (\mathrm{mod}\ N))\ =\ x^{i+j-(k-1)}\pmod{\ell\OE}
\end{equation}
for any integer  $x$  prime to  $N$.

(ii) 
Suppose  $\ell\nmid N$. 
Then (\ref{eqn:epsilon2}) must hold for  $x=\ell$, 
which is possible only if  $i+j\equiv k-1$ (mod $\ell-1$). 
In particular, we obtain  $\ebar=1$. 
\end{proof}

\begin{proof}[Proof of Theorem \ref{thm:modular}]
(i) 
Suppose  
$\ell\nmid\varphi(N)$  and  $\ell^2\nmid N$. 
Then  $\rfl$  is semistable at  $\ell$.  
By assumption, we have 
$\Tr(\Frob_q|\Vfl)\equiv q^i+q^j\ (\mathrm{mod}\ \ell\OE)$.
Combining this with Lemma \ref{lem:modular} (i), we obtain  
$\det(T-\Frob_q|\Vfl)\equiv 
 \det(T-\Frob_q|\chi^i\oplus \chi^j)$ (mod $\ell\OE$).
We also have the congruence (\ref{eqn:cong}). 
Therefore, if  $\ell>4q^{2(k-1)}$, 
it follows from 
Theorem \ref{thm:main3} 
(applied with  
$V'=\chi^{i'}\oplus\chi^{j'}$, where  
$i',j'$  are integers in  $[0,\ell-2]$  such that  
$i'\equiv i$, 
$j'\equiv j$ (mod $\ell-1$)) that 
$\rfl\simeqss\chi^i\oplus\chi^j$  
as $\El$-linear representations of the decomposition group  $G_q$  of  $q$.
Looking at the Weil weights, we obtain  
$i\equiv j\equiv (k-1)/2\ (\mathrm{mod}\ \ell-1)$. 

(ii)
If  $k$  is even, then the impossibility of (\ref{eqn:Cijl}) 
follows from Part (i). 

If  $k=1$  and the congruence condition (\ref{eqn:Cijl}) holds, 
then Part (i) together with (\ref{eqn:cong}) implies that 
$\rflbar:=\rfl$ (mod $\lambda$) is unipotent and, 
in particular, $\Im(\rflbar)$  is an $\ell$-group. 
On the other hand, 
if  $k=1$, then by \cite{Deligne-Serre}, 
$\Im(\rfl)$  is finite and its image in  
$\mathrm{PGL}_2(\OEl)$  is either 
dihedral, $A_4$, $S_4$  or  $A_5$. 
Since the kernel of the reduction map  
$\GL_2(\OEl)\to\GL_2(\kl)$  is pro-$\ell$, the representation  
$\rflbar$  cannot be unipotent if  $\ell\geq 3$. 

Finally, assume  $\ell\nmid N$. 
Then  $\rfl$  is crystalline at  $\ell$,
and thus the Fontaine-Laffaille theory \cite{Fontaine-Laffaille} 
implies that 
the tame inertia weights and the Hodge-Tate weights of  $\rfl$ 
coincide with each other. Hence it follows from 
(\ref{eqn:cong}) that
$\{i,j \}\equiv \{0,k-1\}\ (\mathrm{mod}\ \ell-1)$.
Since  $\ell>k$, we obtain  $\{(k-1)/2,(k-1)/2\}=\{0,k-1\}$, 
which is impossible unless $k=1$.
\end{proof}


\medskip\noindent
(Y.\ O.) 
Research Institute for Mathematical Sciences, Kyoto University \\ 
Kyoto, 606-8502 Japan \\
Email address: {\tt yozeki@kurims.kyoto-u.ac.jp}

\sn
(Y.\ T.)
Faculty of Mathematics, Kyushu University \\
744, Motooka, Nishi-ku, Fukuoka, 819-0395 Japan\\
Email address: {\tt taguchi@math.kyushu-u.ac.jp}


\begin{thebibliography}{MM}
\bibitem{Billerey-Dieulefait}
N.\ Billerey and L.\ V.\ Dieulefait, 
{\it Explicit large image theorems for modular forms}, 
preprint, 
{\tt arXiv:1210.5428}

\bibitem{Bourdon}
A.\ Bourdon, 
{\it A uniform version of a finiteness conjecture for CM elliptic curves},
preprint, 
{\tt arXiv:1305.5241}

\bibitem{Caruso}
X.\ Caruso, 
{\it Repr\'esentations semi-stables de torsion dans le case $er < p-1$},
J.\ Reine Angew.\ Math.\ {\bf 594} (2006), 35--92

\bibitem{Caruso-Savitt}
X.\ Caruso and D.\ Savitt, 
{\it Polygones de Hodge, de Newton et de l'inertie mod\'er\'ee des representations semi-stables}, 
Math.\ Ann.\ {\bf 343} (2009), 773--789

\bibitem{WeilII}
P.\ Deligne, 
{\it La conjecture de Weil, II}, 
Publ.\ Math.\ de l'IH\'ES\ {\bf 52} (1980), 137--252

\bibitem{Deligne-Serre}
P.\ Deligne and J.-P.\ Serre, 
{\it Formes modulaires de poids $1$}, 
Ann.\ Sci.\ \'Ecole Norm.\ Sup.\ {\bf 7} (1974), 507--530 

\bibitem{Fontaine}
J.-M.\ Fontaine, 
{\it Repr\'esentations $p$-adiques semi-stables}, 
with an appendix by Pierre Colmez,
in ``P\'eriodes $p$-adiques'' (Bures-sur-Yvette, 1988), 
Ast\'erisque {\bf 223} (1994), pp.\ 113--184

\bibitem{Fontaine-Laffaille}
J.-M.\ Fontaine and G.\ Laffaille, 
{\it Construction de representations $p$-adiques}, 
Ann.\ Sci.\ \'Ecole Norm.\ Sup.\ {\bf 15} (1982), 547--608

\bibitem{Fontaine-Mazur}
J.-M.\ Fontaine and B.\ Mazur, 
{\it Geometric Galois representations},
in: ``Elliptic Curves, Modular Forms, and Fermat's Last Theorem'',  
J.\ Coates and S.-T.\ Yau (eds.), 
Internat.\ Press, Cambridge, MA, 1995 
pp.\ 41--78 in the first ed.\ 
(pp. 190--227 in the second ed.)

\bibitem{Ghate-Parent}
E.\ Ghate and P.\ Parent, 
{\it On uniform large Galois images for modular abelian varieties},  
Bull.\ Lond.\ Math.\ Soc.\ {\bf 44} (2012), 1169--1181 

\bibitem{Kiming-Verrill}
I.\ Kiming and H.\ Verrill, 
{\it On modular mod $\ell$ Galois representations with exceptional images}. 
J.\ Number Theory {\bf 110} (2005), 236--266

\bibitem{Ozeki}
Y.\ Ozeki, 
{\it Non-existence of certain Galois representations with a uniform tame inertia weight},
Int.\ Math.\ Res.\ Not.\ IMRN {\bf 2011}, 2377--2395

\bibitem{Ozeki2}
Y.\ Ozeki, 
{\it Non-existence of certain CM abelian varieties with prime power torsion}, 
Tohoku Math.\ J.\ (2), in press

\bibitem{Ras-Tama}
C.\ Rasmussen and A.\ Tamagawa, 
{\it A finiteness conjecture on Abelian varieties with constrained
prime power torsion}, 
Mathematics Research Letters {\bf 15} (2008), 1223--1231

\bibitem{Ras-Tama2}
C.\ Rasmussen and A.\ Tamagawa, 
{Arithmetic of abelian varieties with constrained torsion}, 
preprint, 
{\tt arXiv:1302.1477}

\bibitem{Saito}
T.\ Saito, 
{\it Weight spectral sequences and independence of $\ell$}, 
J.\ Inst.\ Math.\ Jussieu {\bf 2} (2003), 583--634

\bibitem{Stiefel-Whitney}
T.\ Saito, 
{\it The second Stiefel-Whitney classes of $\ell$-adic cohomology}, 
to appear in
J.\ Reine Angew.\ Math.\

\bibitem{Serre:FL}
J.-P.\ Serre, 
{\it Facteurs locaux des fonctions z\^eta des vari\'et\'es alg\'ebriques 
(d\'efinitions et conjectures)}, 
S\'em.\ Delange-Pisot-Poitou, 1969/70, n$^{\text o}$~19, 
in: Collected Papers, Vol.\ II, Springer-Verlag, pp.\ 581--592 

\bibitem{Serre:PG}
J.-P.\ Serre, 
{\it Propri\'et\'es galoisiennes des points d'ordre fini des courbes elliptiques}, 
Invent.\ Math.\ {\bf 15} (1972), 259--331

\bibitem{Serre:Cong}
J.-P.\ Serre, 
{\it Congruences et formes modulaires (d'apres H.\ P.\ F.\ Swinnerton-Dyer)}, 
S\'eminaire Bourbaki, 1971/72, Exp.\ 416, pp.\ 319--338,  
Lecture Notes in Math.\ {\bf 317}, Springer-Verlag, Berlin, 1973 

\bibitem{Serre:RL}
J.-P.\ Serre, 
{\it Repr\'esentations lin\'eaires des groupes finis}, 
3eme ed., Hermann, Paris, 1978

\bibitem{Serre-Tate}
J.-P.\ Serre and J.\ Tate, 
{\it Good reduction of abelian varieties}, 
Ann. of Math.\ {\bf 88} (1968), 492--517

\bibitem{Suh}
J.\ Suh, 
{\it Symmetry and parity in Frobenius action on cohomology}, 
Compos.\ Math.\ {\bf 148} (2012), 295--303 

\end{thebibliography}
\end{document}